\tiny\color{gray},
\renewcommand{\paragraph}[1]{\textbf{#1}~~}
\theoremstyle{plain}
\newtheorem{theorem}{Theorem}[section]
\theoremstyle{definition}
\newtheorem{definition}[theorem]{Definition}
\theoremstyle{remark}
\newcommand{\algoname}{\textsc{Sketched Lanczos}}
\newcommand{\scorename}{\textsc{Sketched Lanczos Uncertainty}}
\newcommand{\bftab}{\fontseries{b}\selectfont}
\newcommand{\bbR}{\mathbb R} 
\newcommand{\cD}{\mathcal D}
\newcommand{\cL}{\mathcal L}
\newcommand{\cK}{\mathcal K}
\newcommand{\norm}[1]{\lVert #1 \rVert}
\newcommand{\eps}{\varepsilon}
\newcommand{\Prpc}[2]{\Pr_{#1}\!\Big({#2} \Big)}
\newcommand{\mat}[1]{\mathbf{#1}}
\newcommand{\J}[0]{\mat{J}}  
\newcommand{\G}[0]{\mat{G}}  
\newcommand{\Hess}[0]{\mat{H}}  
\newcommand{\ggn}{\textsc{ggn}}
\newcommand{\himem}{hi-memory Lanczos}
\newcommand{\lomem}{low-memory Lanczos}
\newcommand{\gradquery}{\J_{\theta^*}(x)}
\newcommand{\paramopt}{\theta^*}
\newcommand{\Meig}{M_{\textnormal{eig}}}
\newcommand{\Mnoeig}{M_{\textnormal{noeig}}}
\title{Sketched Lanczos uncertainty score: a low-memory summary of the Fisher information}
\author{%
  Marco Miani\thanks{Equal contribution} \\
  Technical University of Denmark\\
  \texttt{mmia@dtu.dk}
  \And
  Lorenzo Beretta$^*$ \\
Unaffiliated\\
  \texttt{lorenzo2beretta@gmail.com}
  \And
  Søren Hauberg \\
  Technical University of Denmark\\
  \texttt{sohau@dtu.dk}
}
\begin{document}

\maketitle

\begin{abstract}
    Current uncertainty quantification is memory and compute expensive, which hinders practical uptake.
    To counter, we develop \scorename{} (\textsc{slu}): an architecture-agnostic uncertainty score that can be applied to pre-trained neural networks with minimal overhead.
    Importantly, the memory use of \textsc{slu} only grows \emph{logarithmically} with the number of model parameters.
    We combine Lanczos' algorithm with dimensionality reduction techniques to compute a sketch of the leading eigenvectors of a matrix. Applying this novel algorithm to the Fisher information matrix yields a cheap and reliable uncertainty score.
    Empirically, \textsc{slu} yields well-calibrated uncertainties, reliably detects out-of-distribution examples, and consistently outperforms existing methods in the low-memory regime.
    \looseness=-1

\end{abstract}

\section{Introduction}


The best-performing uncertainty quantification methods share the same problem: \emph{scaling}. Practically this prevents their use for deep neural networks with high parameter counts.
Perhaps, the simplest way of defining such a score is to independently train several models, perform inference, and check how consistent predictions are across models. 
The overhead of the resulting 'Deep Ensemble' \citep{lakshminarayanan2017simple} notably introduces a multiplicative overhead equal to the ensemble size.
Current approaches aim to reduce the growth in training time costs by quantifying uncertainty through \emph{local} information of a single pre-trained model. This approach has shown some success for methods like Laplace's approximation \citep{mackay1992practical, immer2021improving, khan2019approximate}, \textsc{swag} \citep{maddox2019simple}, \textsc{scod} \citep{sharma2021sketching} or Local Ensembles \citep{madras2019detecting}. They avoid the need to re-train but still have impractical memory needs.


A popular approach to characterizing local information is the empirical Fisher information matrix, which essentially coincides with the Generalized Gauss-Newton (\ggn) matrix \citep{kunstner2019limitations}. Unfortunately, for a $p$-parameter model, the \ggn{} is a $p\times p$ matrix, yielding such high memory costs that it cannot be instantiated for anything but the simplest models. 
The \ggn{} is, thus, mostly explored through approximations, e.g.\@ block-diagonal \citep{botev2017practical}, Kronecker-factorized \citep{ritter2018online, lee2020estimating, martens2015optimizing} or even diagonal \citep{ritter2018scalable, miani2022laplacian}.
An alternative heuristic is to only assign uncertainties to a subset of the model parameters \citep{daxberger2021bayesian, kristiadi2020being}, e.g.\@ the last layer.

Instead, we approximate the \ggn{} with a low-rank matrix. A rank-$k$ approximation of the \ggn{} can be computed using Lanczos algorithm \citep{madras2019detecting, daxberger2021laplace} or truncated singular value decomposition (\textsc{svd}) \citep{sharma2021sketching}.
These approaches deliver promising uncertainty scores but are limited by their memory footprint.
Indeed all aforementioned techniques require $k \cdot p$ memory. 
Models with high parameter counts are, thus, reduced to only being able to consider \emph{very} small approximate ranks. 

\textbf{In this work}, we design a novel algorithm to compute the local ensemble uncertainty estimation score introduced by \cite{madras2019detecting}, reintroduced in \cref{sec:background_score}. 
Our algorithm is substantially more memory-efficient than the previous one both in theory and practice, thus circumventing the main bottleneck of vanilla Lanczos and randomized \textsc{svd} (\cref{fig:teaser}). 
To that end, we employ sketching dimensionality-reduction techniques, reintroduced in \cref{sec:background_sketching}, that trade a small-with-high-probability error in some matrix-vector operations for a lower memory usage. Combining the latter with the Lanczos algorithm (reintroduced in \cref{sec:background_lanczos}) results in the novel \algoname. \\
This essentially drops the memory consumption from $\mathcal{O}(pk)$ to $\mathcal{O}(k^2\varepsilon^{-2})$ in exchange for a provably bounded error $\varepsilon$, \emph{independently} on the number of parameters $p$ (up to log-terms).\looseness=-1

Applying this algorithm in the deep neural networks settings allows us to scale up the approach from \cite{madras2019detecting} and obtain a better uncertainty score for a fixed memory budget.


\begin{wrapfigure}[15]{r}{0.6\textwidth}
    \centering
    \vspace{-5mm}
    \includegraphics[width=1\linewidth]{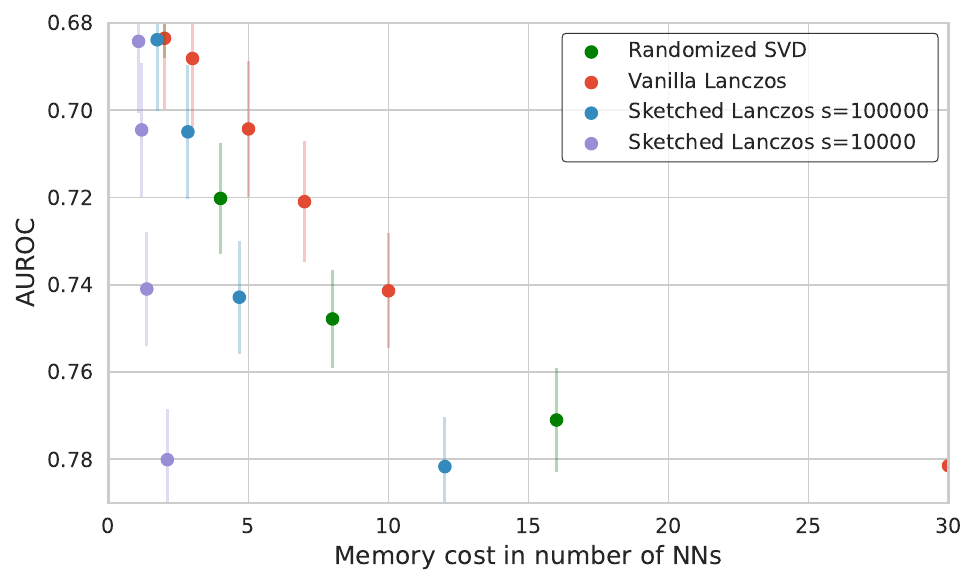}
    \vspace{-0.4cm}
    \caption{OoD detection performance ($\swarrow$) on a ResNet. 
    }
    \label{fig:teaser}
\end{wrapfigure}

Our contribution is twofold: (1) we prove that orthogonalization approximately commutes with sketching in \cref{sec:method}, which makes it possible to sketch Lanczos vectors on the fly and orthogonalize them post-hoc, with significant memory savings; 
(2) we empirically show that, in the low-memory-budget regime, the disadvantage of introducing noise through sketching is outweighed by a higher-rank approximation, thus performing better than baselines when the same amount of memory is used. 

\section{Background}
Let $f_{\theta}: \bbR^d \rightarrow \bbR^t$ denote a neural network with parameter $\theta\in\bbR^p$, or equivalently $f:\bbR^p \times \bbR^d \rightarrow \bbR^t$. Let $\J_{\theta'}(x) = \nabla_{\theta} f_{\theta}(x)|_{\theta = \theta'} \in \bbR^{t \times p}$ be its Jacobian with respect to the parameters, evaluated at a datapoint $x\in\bbR^d$. Given a training dataset $\mathcal{D}=\{(x_i,y_i)\}_{i=1,\dots,n}$ and a loss function $\cL(\theta) = \sum_{(x, y) \in \cD} \ell(y, f(x, \theta)$, the Generalized Gauss-Newton matrix (\ggn) is defined as
\begin{equation}
    \G_\theta
    = \sum_{i=1}^n \J_{\theta}(x_i)^\top \Hess(x_i) \J_{\theta}(x_i),
\end{equation}
where $\Hess(x_i) = \nabla^2_{f_{\theta}(x_i)} \ell(y_i | f_{\theta}(x_i)) \in \bbR^{t \times t}$ is the Hessian of the loss with respect to the neural network output. We reduce the notational load by stacking the per-datum Jacobians into $\J_{\theta} = [\J_{\theta}(x_1); \ldots; \J_{\theta}(x_n)] \in \bbR^{nt \times p}$ 
and similarly for the Hessians, and write the \ggn{} matrix as $\G_\theta = \J_{\theta}^\top \Hess \J_{\theta}$. 
For extended derivations and connections with the Fisher matrix we refer to the excellent review by \cite{kunstner2019limitations}. In the following, we assume access to a pre-trained model with parameter $\paramopt$ and omit the dependency of $\G$ on $\paramopt$. \looseness=-1

\textbf{Computationally} we emphasize that Jacobian-vector products can be performed efficiently when $f$ is a deep \textsc{nn}. Consequently, the \ggn-vector product has twice the cost of a gradient backpropagation, at least for common choices of $\ell$ like \textsc{mse} or cross-entropy \citep{khan2021bayesian}.



\subsection{Uncertainty score}
\label{sec:background_score}

We measure the uncertainty at a datapoint $x$ as the variance of the prediction $f_{\theta}(x)$ with respect to a distribution over parameter $\theta$ defined at training time and independently of $x$.
This general scheme has received significant attention. For example, Deep Ensemble \citep{lakshminarayanan2017simple} uses a sum of delta distribution supported on independently trained models, while methods that only train a single network $\paramopt$ generally use a Gaussian $\mathcal{N}(\theta|\paramopt, M)$ with covariance $M\in\bbR^{p\times p}$. In the latter case, a first-order approximation of the prediction variance is given by a $M$-norm of the Jacobian
\begin{equation}\label{eq:uncertainty_quantification_score}
    \textsc{Var}_{\theta\sim \mathcal{N}(\theta|\paramopt,M)}[f_\theta(x)] 
    \approx 
    \textsc{Var}_{\theta\sim \mathcal{N}(\theta|\paramopt,M)}\left[f^L_\theta(x)\right]
    = \textsc{Tr}(\J_{\paramopt}(x) \cdot M \cdot \J_{\paramopt}(x)^\top),
\end{equation}
where $f^L_\theta(x) = f_\theta(x) + \J_{\paramopt}(x)\cdot (\paramopt-\theta)$ is a linearization of $\theta \mapsto f_\theta(x)$ around $\paramopt$. 

The \ggn{} matrix (or empirical Fisher; \citet{kunstner2019limitations}) is notably connected to uncertainty measures and, more specifically, to the choice of the matrix $M$. Different theoretical reasoning leads to different choices and we focus on two of them:
\begin{equation}\label{eq:two_covariance_matrixes}
    \Meig = (\G + \alpha\mathbb{I}_p)^{-1}
    \qquad\qquad
    \Mnoeig = \mathbb{I} - \Pi_{\G},
\end{equation}
where $\Pi_{\G}$ is the projection onto the non-zero eigenvectors of $\G$ and $\alpha>0$ is a constant.

\cite{madras2019detecting} justify $\Mnoeig$ through small perturbation along zero-curvature directions. 
\footnote{To be precise, \cite{madras2019detecting} uses the Hessian rather than the \textsc{ggn}, although their reasoning for discarding the eigenvalues applies to both matrices. Thus, while the score with $\Mnoeig$ is technically novel, it is a natural connection between Laplace approximations and local ensembles. Also note that some work uses the Hessian in the eigenvalues setting \citep{mackay2003information} despite this requires ad-hoc care for negative values.}
\cite{immer2021improving} justify $\Meig$ in the Bayesian setting where $\alpha$ is interpreted as prior precision. 
We do not question these score derivations and refer to the original works, but we highlight their similarity. Given an orthonormal eigen-decomposition 
of $\G=\sum_i \lambda_i v_i v_i^\top$ we see that
\begin{equation}
    \Meig = \sum_i \frac{1}{\lambda_i+\alpha} v_i v_i^\top
    \qquad
    \Mnoeig = \sum_i \delta_{\{\lambda_i=0\}} v_i v_i^\top.
\end{equation}
Thus both covariances are higher in the directions of zero-eigenvalues, and the hyperparameter $\alpha$ controls how many eigenvectors are relevant in $\Meig$. 





\textbf{Practical choices.}
The matrix $\G$ is too big to even be stored in memory and approximations are required, commonly in a way that allows access to either the inverse or the projection. A variety of techniques are introduced like diagonal, block diagonal, and block \textsc{kfac} which also allow for easy access to the inverse. Another line of work, like \textsc{swag} \citep{maddox2019simple}, directly tries to find an approximation of the covariance $M$ based on stochastic gradient descent trajectories. 
\begin{wrapfigure}[14]{r}{0.41\textwidth}
    \centering
    \vspace{-0.4cm}
    \includegraphics[width=1\linewidth]{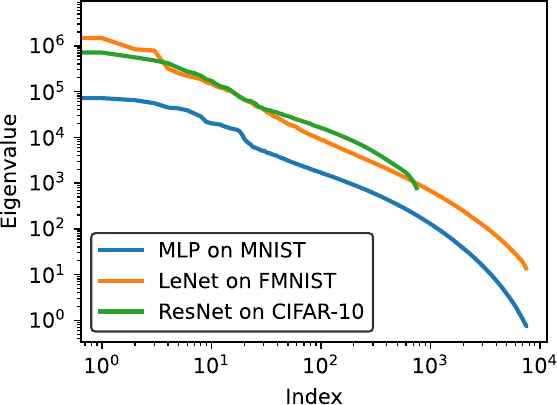}
    \vspace{-0.55cm}
    \caption{\ggn{} eigenvalues exponential decay. Average and standard deviation over 5 seeds. Details are in \cref{sec:spectral_property}.}
    \label{fig:eigenvalues_comparison}
\end{wrapfigure}
Alternatively, low-rank approximations use the eigen-decomposition relative to the top $k$ eigenvalues, which allows direct access to both inverse and projection.

\textbf{Is low-rank a good idea?} The spectrum of the \ggn{} has been empirically shown to decay exponentially \citep{sagun2017empirical, papyan2018full, ghorbani2019investigation} (see \cref{fig:eigenvalues_comparison}). 
We investigate this phenomenon further in \cref{sec:spectral_property} with an ablation over Lanczos hyperparameters. 
This fast decay implies that the quality of a rank-$k$ approximation of $\G$ improves exponentially w.r.t.\@ $k$ if we measure it with an operator or Frobenius norm, thus supporting the choice of low-rank approximation.
Moreover, in the \emph{overparametrized} setting $p \gg nt$, the rank of the \ggn{} is by construction at most $nt$, which is closely linked with functional reparametrizations \citep{roy2024reparameterization}. 
%

The low-rank approach has been applied intensively \citep{madras2019detecting, daxberger2021laplace, sharma2021sketching} with success, although always limited by memory footprint of $pk$. \cite{madras2019detecting} argue in favor of a not-too-high $k$ also from a numerical perspective, as using the ``small'' eigenvectors appears 
sensitive to noise and consequently is not robust. 


\subsection{The Lanczos algorithm}
\label{sec:background_lanczos}

The Lanczos algorithm is an iterative method for tridiagonalizing a symmetric matrix $G$. 
If stopped at iteration $k$, Lanczos returns a column-orthonormal matrix $V = [V_1| \dots |V_k] \in \bbR^{p \times k}$ and a tridiagonal matrix $T \in \bbR^{k \times k}$ such that $V^\top G V = T$.
The range space of $V$ corresponds to the Krylov subspace $\cK_k = \texttt{span}\{v, G v, \dots , G^{k-1} v \}$, where $v = V_1$ is a randomly chosen vector. Provably, $\cK_k$ approximates the eigenspace spanned by the top-$k$ eigenvectors of $G$, i.e.\@ those corresponding to the eigenvalues of largest values \citep{meurant2006lanczos}.
Thus, $V T V^\top$ approximates the projection of $G$ onto its top-$k$ eigenspace. 
Notice that projecting $G$ onto its top-$k$ eigenspace yields the best rank-$k$ approximation of $G$ under any unitarily-invariant norm \citep{mirsky1960symmetric}. 
%
Once the decomposition $G \approx V T V^\top$ is available, we can retrieve an approximation to the top-$k$ eigenpairs of $G$ by diagonalizing $T$ into $T = W \Lambda W^\top$\!\!, which can be done efficiently for tridiagonal matrices \citep{dhillon1997new}. 
It has both practically and theoretically been found that this eigenpairs' approximation is very good.
We point the reader to \cite{meurant2006lanczos} and \cite{cullum2002lanczos} for a comprehensive survey on this topic.\looseness=-1

\paragraph{The benefits of Lanczos.}
Lanczos has two features that make it particularly appealing. 
First, Lanczos does not need explicit access to the input matrix $G$, but only access to an implementation of $G$-vector product $u \mapsto G u$.
Second, Lanczos uses a small working space: only $3p$ floating point numbers, where the input matrix is $p \times p$. Indeed, we can think of Lanczos as releasing its output in streaming and only storing a small state consisting of the last three vectors $V_{i-1}, V_i$ and $V_{i+1}$.\looseness=-1

\paragraph{The downsides of Lanczos.}
Unfortunately, the implementation of Lanczos described above is prone to numerical instability, causing $V_1 \dots V_k$ to be far from orthogonal. A careful analysis of the rounding errors causing this pathology was carried out by 
\cite{paige1971computation,paige1976error, paige1980accuracy}. 
To counteract, a standard technique is to re-orthogonalize $V_{i+1}$ against all $\{V_j\}_{j \leq i}$, at each iteration. This technique has been employed to compute the low-rank approximation of huge sparse matrices \citep{simon2000low}, as well as by \cite{madras2019detecting} to compute an approximation to the top-$k$ eigenvectors.
Unfortunately, this version of Lanczos loses one of the two benefits described above, in that it must store a larger state consisting of the entire matrix $V$. Therefore, we dub this version of the algorithm \emph{hi-memory Lanczos} 
and the memory-efficient version described above \emph{low-memory Lanczos}. 
See \cref{sec:appendix_lanczos} for a comprehensive discussion on these two versions.\looseness=-1
%

\paragraph{Post-hoc orthogonalization Lanczos.} 
Alternatively, instead of re-orthogonalizing at every step as in \himem{}, we can run \lomem{}, store all the vectors, and orthogonalize all together at the end. Based on the observations of \cite{paige1980accuracy}, we expect that orthogonalizing the output of \lomem{} post-hoc should yield an orthonormal basis that approximately spans the top-$k$ eigenspace, similar to \himem{}. 
This post-hoc version of Lanczos is however insufficient. 
It avoids the cost of orthogonalizing at every iteration but still requires storing the vectors, thus losing again the benefit of memory requirement. Or at least it does unless we find an efficient way to store the vectors.\looseness=-1



\subsection{Sketching}
\label{sec:background_sketching}
Sketching is a key technique in randomized numerical linear algebra \citep{martinsson2020randomized} to reduce memory requirements. Specifically, sketching embeds high dimensional vectors from $\bbR^p$ into a lower-dimensional space $\bbR^s$, such that the expected norm error of vector dot-products is bounded, and, as a result, also the score in \cref{eq:uncertainty_quantification_score}.
Here we give a concise introduction to this technique. \looseness=-1

\begin{definition}[Subspace embedding]
\label{def:subspace embedding}
Fix $\eps>0$. A $(1\pm \eps)$ $\ell_2$-subspace embedding for the column space of an $p \times k$ matrix $U$ is a matrix $S$ for which for all $y \in \bbR^k$\looseness=-1
\begin{equation}
\norm{SUy}_2 = (1\pm \eps) \norm{U y}_2.
\end{equation}
\end{definition}
\begin{wraptable}[9]{r}{0.45\textwidth}
    \centering
    \vspace{-4mm}
    \begin{tabular}{ r l l }
        \toprule
                      & {\centering Time}                    & {\centering Memory} \\ 
        \midrule
        Dense JL      & $\mathcal{O}(p^\omega)$              & $p^2$ \\  
        Sparse JL     & $\mathcal{O}(p \cdot \varepsilon s)$ & $p \cdot \varepsilon s$ \\  
        \textsc{srft} & $\mathcal{O}(p \log p)$ &  $p + s$ \\
        \bottomrule
    \end{tabular}
    \vspace{-1mm}
    \caption{Sketch complexities comparison. Here $\omega$ is such that the current best matrix-multiplication algorithm runs in time $n^\omega$.}
    \label{tab:sketch_choices_comparison}
\end{wraptable}
The goal is to design an \textit{oblivious} subspace embedding, that is a random matrix $S$ such that, for any matrix $U$, $S$ is a subspace embedding for $U$ with sufficiently high probability.
In our method, we use a Subsampled Randomized Fourier Transform (\textsc{srft}) to achieve this goal \citep{ailon2009fast}. A \textsc{srft} is a $s \times p$ matrix defined by the product $\nicefrac{1}{\sqrt{s p}} PHD$, where $D$ is a diagonal matrix where each diagonal entry is an independent Rademacher random variable, $H$ is the discrete Fourier transform, and $P$ is a diagonal matrix where $s$ random diagonal entries are set to one and every other entry is set to zero.
Thanks to the Fast Fourier Transform algorithm, \textsc{srft} can be evaluated in $O(p\log p)$ time, and its memory footprint is only $p + s$.\looseness=-1

The following theorem shows that, as long as the sketch size $s$ is big enough, \textsc{srft} is an oblivious subspace embedding with high probability.
\begin{theorem}[Essentially, Theorem 7 in \cite{woodruff2014sketching}]
\label{thm:woodruff subspace}
For any $p \times k$ matrix $U$, \textsc{srft} is a $(1\pm \eps)$-subspace embedding for the column space of $U$ with probability $1-\delta$ as long as $s = \Omega((k + \log p) \varepsilon^{-2} \log(k / \delta))$.
\end{theorem}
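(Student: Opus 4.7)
The plan is to follow the standard two-stage analysis of \textsc{srft} embeddings: first argue that the Hadamard rotation $HD$ spreads the mass of the matrix evenly across rows, then argue that the subsampling step preserves spectral norms by matrix concentration.

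First, I would reduce to orthonormal columns. Since being a subspace embedding depends only on the column space of $U$, I can replace $U$ by $Q \in \bbR^{p \times k}$ with orthonormal columns (e.g.\@ via a QR decomposition). The defining condition then becomes $\norm{SQy}_2 = (1\pm\eps)\norm{y}_2$ for every $y \in \bbR^k$, which is equivalent to the spectral bound $\norm{(SQ)^\top (SQ) - \mathbb{I}_k}_{\text{op}} \leq \eps$ (up to an $O(\eps)$ rescaling). This reformulation is crucial because it lets me bring in matrix-concentration tools.

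Second, I would analyze the randomized Hadamard step. The matrix $HD$ is orthogonal, so $\tilde Q := HDQ$ still has orthonormal columns, i.e.\@ $\tilde Q^\top \tilde Q = \mathbb{I}_k$. The benefit of the random sign flips $D$ is that each row of $\tilde Q$ is a random signed combination of rows of $HQ$; by a Hoeffding/Khintchine inequality applied coordinatewise and a union bound over the $p$ rows and $k$ columns, with probability at least $1-\delta/2$ every row satisfies
\begin{equation}
\norm{\tilde Q_{i,:}}_2^2 \;=\; O\!\left(\frac{k + \log(p/\delta)}{p}\right).
\end{equation}
This ``flattening'' of the row norms (equivalently, of the leverage scores) is the key structural property that makes uniform subsampling work.

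Third, I would analyze the subsampling. Conditioned on the flattened row norms, the sampling-and-rescaling matrix $\tfrac{1}{\sqrt{sp}} PH D$ produces $SQ = \sqrt{p/s}\,(\tilde Q)_{\mathcal I,:}$ where $\mathcal I$ is a uniformly random set of $s$ rows. Writing $(SQ)^\top(SQ) = \tfrac{p}{s}\sum_{i \in \mathcal I} \tilde Q_{i,:}^\top \tilde Q_{i,:}$ as a sum of i.i.d.\@ rank-one matrices with mean $\mathbb{I}_k$ and operator norm bounded by the flattened row-norm bound, I would apply a matrix Chernoff/Bernstein inequality (see e.g.\@ Tropp). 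This gives $\norm{(SQ)^\top(SQ) - \mathbb{I}_k}_{\text{op}} \leq \eps$ with probability $1-\delta/2$ provided $s = \Omega((k + \log p)\,\eps^{-2}\log(k/\delta))$, matching the stated bound. A union bound over the two failure events finishes the argument.

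The main obstacle is the interaction between the two random stages: one must condition on the flattened row-norm event for the Hadamard step and then run matrix Chernoff on the subsampling step, ensuring the parameters $s$, $\eps$, $\delta$, $k$, $\log p$ combine exactly as claimed. The Hadamard flattening gives the $(k+\log p)$ factor, while matrix Chernoff yields the $\eps^{-2}\log(k/\delta)$ factor; getting both simultaneously without losing extra log factors is the only delicate bookkeeping, everything else is standard.
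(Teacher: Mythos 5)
The paper does not prove this theorem itself; it imports it by citation from Woodruff's survey, whose underlying argument is exactly the two-stage analysis you describe (reduce to orthonormal columns, flatten the leverage scores with the randomized Hadamard/Fourier rotation, then apply matrix Chernoff to the uniform row subsampling). Your reconstruction is correct and matches that standard proof, including how the $(k+\log p)$ factor arises from the flattening and the $\eps^{-2}\log(k/\delta)$ factor from the concentration step.
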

We stress that, although several other random projections work as subspace embeddings, our choice is not incidental. Indeed other sketches, including the Sparse JL transform \citep{kane2014sparser, nelson2013osnap} or the Dense JL transform (Theorem 4, \citet{woodruff2014sketching}), theoretically have a larger memory footprint or a worse trade-off between $s$ and $k$, as clarified in \cref{tab:sketch_choices_comparison}. From such a comparison, it is clear that \textsc{srft} is best if our goal is to minimize memory footprint. At the same time, evaluation time is still quasilinear.

\section{Method}
\label{sec:method}
We now develop the novel \textsc{Sketched Lanczos} algorithm by combining the `vanilla' Lanczos algorithm (\cref{sec:background_lanczos}) with sketching (\cref{sec:background_sketching}). Pseudo-code is presented in  \Cref{alg:SL_alg}. 
Next, we apply this algorithm in the uncertainty quantification setting and compute an approximation of the score in \Cref{eq:uncertainty_quantification_score} due to \cite{madras2019detecting}. Our motivation is that given a fixed memory budget, the much lower memory footprint induced by sketching 
allows for a higher-rank approximation of $\G$. 


\subsection{Sketched Lanczos}
We find the best way to explain our algorithm is to first explain a didactic variant of it, where sketching and orthogonalization happen in reverse order. 

Running \lomem{} for $k$ iterations on a $p\times p$ matrix iteratively constructs the columns of a $p\times k$ matrix $V$. Then, post-hoc, we re-orthogonalize the columns of $V$ in a matrix $U\in\mathbb{R}^{p\times k}$. Such a matrix is expensive to store due to the value of $p$, but if we sample a \textsc{srft} sketch matrix $S\in\mathbb{R}^{s \times p}$, we can then store a sketched version $SU\in\mathbb{R}^{s\times k}$, saving memory as long as $s<p$. In other words, this is post-hoc orthogonalization Lanczos 
with a sketching at the end.

We observe that sketching the columns of $U$ is sufficient to $\varepsilon$-preserve the norm of matrix-vector products, with high probability. In particular, the following lemma holds (proof in \cref{sec:appendix dim reduction}).

\begin{restatable}[Sketching low-rank matrices]{lemma}{sketchingLowRankMat}
\label{lem:sketching quadratic forms}
Fix $0 < \eps, \delta < \sfrac{1}{2}$ and sample a random $s \times p$ \textsc{srft} matrix $S$. Then, for any $v \in \bbR^p$ and any matrix $U \in \bbR^{p \times k}$ with $||v||_2, ||U||_2 = \mathcal{O}(1)$ we have
\begin{equation}
\Prpc{S}{\norm{(SU)^\top  (Sv)}_2 = \norm{U^\top v}_2 \pm \eps} > 1 - \delta.
\end{equation}
as long as $s = \Omega(k\varepsilon^{-2} \cdot \log p \cdot \log(k / \delta))$. 
~\end{restatable}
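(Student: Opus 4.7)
The plan is to bootstrap from \Cref{thm:woodruff subspace} applied to the combined column span of $U$ and $v$, then translate the subspace-embedding guarantee into an approximate-inner-product guarantee via polarization, and finally aggregate the per-column errors.

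First, I would form the $p\times(k+1)$ matrix $[U \mid v]$ and apply \Cref{thm:woodruff subspace} with parameters $\varepsilon'$ and $\delta$ (to be chosen) so that, with probability at least $1-\delta$, the SRFT $S$ is a $(1\pm\varepsilon')$-subspace embedding for the column span of $[U \mid v]$. In particular, every vector $w$ in $W = \mathrm{span}\{u_1,\dots,u_k, v\}$ satisfies $\|Sw\|_2^2 = (1\pm\varepsilon')^2 \|w\|_2^2$. Condition on this event for the rest of the argument.

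Next, I would use the polarization identity to move from norms to inner products. For each column $u_i$ of $U$, both $u_i+v$ and $u_i-v$ lie in $W$, so
\begin{equation}
\bigl|\langle Su_i, Sv\rangle - \langle u_i, v\rangle\bigr|
\;\le\; \tfrac{1}{4}\bigl((\varepsilon'^{2}+2\varepsilon')\|u_i+v\|_2^{2} + (\varepsilon'^{2}+2\varepsilon')\|u_i-v\|_2^{2}\bigr)
\;\le\; 2\varepsilon'\bigl(\|u_i\|_2^{2} + \|v\|_2^{2}\bigr),
\end{equation}
using the parallelogram identity and $\varepsilon'\le 1/2$. The assumption $\|U\|_2,\|v\|_2 = \mathcal{O}(1)$ gives $\|u_i\|_2\le\|U\|_2 = \mathcal{O}(1)$, so each coordinate of $(SU)^\top(Sv)-U^\top v$ is bounded by $C\varepsilon'$ for some absolute constant $C$.

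Aggregating over the $k$ columns, $\|(SU)^\top(Sv) - U^\top v\|_2 \le C\sqrt{k}\,\varepsilon'$. Choosing $\varepsilon' = \varepsilon/(C\sqrt{k})$ makes this at most $\varepsilon$, and the reverse triangle inequality then yields $\bigl|\|(SU)^\top(Sv)\|_2 - \|U^\top v\|_2\bigr|\le\varepsilon$. Substituting this $\varepsilon'$ into \Cref{thm:woodruff subspace} (with $k$ replaced by $k+1$) gives the required sketch dimension $s = \Omega((k+\log p)\,k\,\varepsilon^{-2}\log(k/\delta))$, which is absorbed into the stated bound $s = \Omega(k\varepsilon^{-2}\log p\,\log(k/\delta))$ in the regime $k\lesssim \log p$ typical of overparametrized networks.

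I do not foresee a serious obstacle: the argument is essentially the standard "subspace embedding implies approximate matrix product" reduction, with the only subtlety being careful tracking of the $\sqrt{k}$ factor that relates the per-coordinate error to the $\ell_2$ error of the $k$-dimensional output vector. The one place a reader might want more care is showing that the bound on $\|u_i\|_2$ follows from $\|U\|_2 = \mathcal{O}(1)$, which is immediate from $\|u_i\|_2 = \|Ue_i\|_2 \le \|U\|_2$.
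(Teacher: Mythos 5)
Your reduction via subspace embedding and polarization is sound in spirit, and the polarization step itself is fine, but there is a genuine quantitative gap: the sketch size you derive is $\Omega\!\left((k + \log p)\,k\,\varepsilon^{-2}\log(k/\delta)\right)$, not the claimed $\Omega\!\left(k\,\varepsilon^{-2}\log p\,\log(k/\delta)\right)$. The extra additive $k$ inside the first factor comes from invoking \Cref{thm:woodruff subspace} once on the $(k+1)$-dimensional subspace $\mathrm{span}\{u_1,\dots,u_k,v\}$: that theorem's dependence on the subspace dimension is $\Omega((d+\log p)\cdots)$, and you set $d=k+1$. Your remark that the difference is ``absorbed'' in the regime $k\lesssim\log p$ does not rescue the argument, because that is \emph{not} the regime of interest here — in the paper's experiments $p\le 2\times 10^{8}$ (so $\log p\approx 20$) while $k$ ranges from about $30$ to $440$, i.e.\ $k\gg\log p$. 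Under the bound you actually prove, the sketch would need $s=\Omega(k^2\varepsilon^{-2}\log(k/\delta))$ when $k>\log p$, which erodes much of the memory saving the lemma is meant to deliver.

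The fix, and what the paper does, is to \emph{not} embed the whole column span of $U$ at once. Instead, for each column index $i$, apply \Cref{thm:woodruff subspace} with subspace dimension $d=2$ to $\mathrm{span}\{u_i,v\}$, with accuracy $\varepsilon'=\varepsilon/\sqrt{k}$ and failure probability $\delta'=\delta/k$; then union-bound over $i=1,\dots,k$. Each application costs $s=\Omega\!\left((2+\log p)(\varepsilon/\sqrt k)^{-2}\log(2k/\delta)\right)=\Omega(k\varepsilon^{-2}\log p\,\log(k/\delta))$, and the union bound over $k$ two-dimensional subspaces costs only a $\log k$ blow-up in the $\log(1/\delta)$ term rather than an additive $k$ in the dimension term. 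The remainder of your argument — polarization to turn norm preservation into inner-product preservation on each coordinate $[(SU)^\top(Sv)]_i$, summing squares to get the $\sqrt{k}$ factor, and the reverse triangle inequality at the end — goes through unchanged under this per-column embedding and reproduces the paper's proof.
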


This algorithm may initially seem appealing since we can compute a tight approximation of $\|U^\top v\|$ by paying only $s\times k$ in memory (plus the neglectable cost of storing $S$). However, as an intermediate step of such an algorithm, we still need to construct the matrix $U$, paying $p\times k$ in memory. Indeed, we defined $U$ as a matrix whose columns are an orthonormal basis of the column space of $V$ and we would like to avoid storing $V$ explicitly and rather sketch each column $V_i$ on the fly, without ever paying $p \times k$ memory. This requires \emph{swapping the order of orthogonalization and sketching}.

This motivates us to prove that if we orthonormalize the columns of $S V$ and apply the same orthonormalization steps to the columns of $V$, then we obtain an approximately orthonormal basis. Essentially, this means that sketching and orthogonalization approximately commute. As a consequence, we can use a matrix whose columns are an orthonormal basis of the column space of $SV$ as a proxy for $SU$ while incurring a small error. Formally, the following holds (proof in \cref{sec:appendix dim reduction}).

\begin{restatable}[Orthogonalizing the sketch]{lemma}{orthogonalizingsketch}
\label{lem:orthonormalizing the sketch}
Fix $0 < \eps, \delta < \sfrac{1}{2}$ and sample a random $s \times p$ \textsc{srft} matrix $S$. As long as $s = \Omega(k\varepsilon^{-2} \cdot \log p \cdot \log(k / \delta))$ the following holds with probability $1-\delta$. 

Given any $p \times k$ full-rank matrix $V$, decompose $V = U R$ and $S V = U_S R_S$ so that $U \in \bbR^{p \times k}$, $U_S\in\bbR^{s \times k}$ and both $U$ and $U_S$ have orthonormal columns. For any unit-norm $v \in \bbR^p$ we have 
\begin{equation}
\label{eq:proj norm}
\norm{U_S^\top (Sv)}_2 = \norm{U^\top v}_2 \pm \eps.
\end{equation}
\end{restatable}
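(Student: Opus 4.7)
The starting observation is that $\operatorname{col}(U_S) = \operatorname{col}(SU)$: since $V$ has full column rank, $R$ is invertible and $SV = SU \cdot R$, so $U_S$ and $SU$ are two bases of the same subspace. Hence $U_S U_S^\top$ equals the orthogonal projection $P_{SU}$ onto $\operatorname{col}(SU)$, and $\norm{U_S^\top Sv}_2 = \norm{P_{SU} Sv}_2$. This already sidesteps the awkward $S$-dependence of $U_S$ and reduces the claim to a statement about a sketched projection.

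Next, I would apply \cref{thm:woodruff subspace} to the $p \times (k+1)$ matrix $[U \mid v]$, whose column space has dimension at most $k+1$, to get that, with probability $1 - \delta/2$ (after tuning constants in $s$), $S$ is a $(1\pm\varepsilon)$ $\ell_2$-subspace embedding on $T := \operatorname{span}(\operatorname{col}(U) \cup \{v\})$. In particular every singular value of $SU$ lies in $[\sqrt{1-\varepsilon}, \sqrt{1+\varepsilon}]$, so $SU$ has full column rank and $\norm{((SU)^\top SU)^{-1}}_{\mathrm{op}} \leq (1-\varepsilon)^{-1}$. Decompose $v = U y^\star + w$ with $y^\star = U^\top v$ and $w \perp \operatorname{col}(U)$; since $SU y^\star \in \operatorname{col}(SU)$ is fixed by $P_{SU}$,
\begin{equation}
    P_{SU} Sv \;=\; SU y^\star \;+\; P_{SU}(Sw).
\end{equation}
The subspace-embedding property applied to $Uy^\star \in T$ immediately gives $\norm{SU y^\star}_2 = (1\pm\varepsilon)\norm{y^\star}_2 = (1\pm\varepsilon)\norm{U^\top v}_2$, so the first summand already matches the target norm up to $\varepsilon$.

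The crux is bounding the second summand. Here I would invoke \cref{lem:sketching quadratic forms} on the deterministic pair $(U, w)$: since $U^\top w = 0$, $\norm{w}_2 \leq \norm{v}_2 = 1$ and $\norm{U}_{\mathrm{op}} = 1$, the lemma yields $\norm{(SU)^\top Sw}_2 \leq \varepsilon$ with probability $1-\delta/2$. Writing $P_{SU}(Sw) = SU\,((SU)^\top SU)^{-1}\,(SU)^\top Sw$ and combining $\norm{SU}_{\mathrm{op}} \leq \sqrt{1+\varepsilon}$ with the conditioning bound above gives $\norm{P_{SU}Sw}_2 = O(\varepsilon)$. A triangle inequality then delivers $\norm{P_{SU}Sv}_2 = \norm{U^\top v}_2 \pm O(\varepsilon)$; a union bound over the two failure events and a constant rescaling of $\varepsilon$ conclude the proof.

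The main obstacle I anticipate is keeping the dependence on $\varepsilon$ \emph{linear} rather than $\sqrt{\varepsilon}$: a generic regression/Pythagoras argument (comparing the exact minimizer $y^\star$ to the sketched minimizer $\tilde y$) would only give $\norm{\tilde y - y^\star}_2 = O(\sqrt{\varepsilon})$, which would force $\varepsilon \leftarrow \varepsilon^2$ and blow the sketch size from $\varepsilon^{-2}$ to $\varepsilon^{-4}$. Avoiding this requires explicitly using $U^\top w = 0$ inside an approximate-matrix-multiplication bound, which is precisely what \cref{lem:sketching quadratic forms} supplies; the well-conditioning of $SU$ then prevents this small inner product from being amplified when passed through $P_{SU}$.
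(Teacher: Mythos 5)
Your proof is correct, and it takes a genuinely different route from the paper's. The paper sets $\bar U = V R_S^{-1}$, so that $U_S = S\bar U$ and $\mathrm{col}(\bar U)=\mathrm{col}(U)$; the subspace-embedding property forces the singular values of $\bar U$ into $[1-\eps,1+\eps]$, whence $\norm{UU^\top - \bar U\bar U^\top}_2\le\eps$, and \cref{lem:sketching quadratic forms} applied to $\bar U$ transfers $\norm{U_S^\top Sv}=\norm{\bar U^\top S^\top S v}\approx\norm{\bar U^\top v}\approx\norm{U^\top v}$. You instead identify $U_SU_S^\top$ with the orthogonal projection onto $\mathrm{col}(SU)$, split $v=Uy^\star+w$ with $w\perp\mathrm{col}(U)$, and control the two resulting terms separately — the in-space term via the embedding, the cross term via $\norm{(SU)^\top Sw}\le\eps$ amplified only by the (bounded) condition number of $SU$. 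Both arguments rest on exactly the same two probabilistic events (the subspace embedding on $\mathrm{span}(\mathrm{col}(U)\cup\{v\})$ and the approximate inner-product preservation of \cref{lem:sketching quadratic forms}), so the sketch-size requirement is identical. Your version has two small advantages: it applies \cref{lem:sketching quadratic forms} to the deterministic pair $(U,w)$, whereas the paper applies it to $\bar U=VR_S^{-1}$, which formally depends on $S$ (a circularity the paper glosses over, resolvable because only $\mathrm{col}(\bar U)=\mathrm{col}(V)$ matters); and your closing remark makes explicit why the error stays $O(\eps)$ rather than $O(\sqrt\eps)$ — a point the paper's proof achieves but does not discuss. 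The paper's version is shorter and generalizes slightly more directly to the operator-norm statement $\norm{UU^\top-\bar U\bar U^\top}_2\le\eps$, which is reused implicitly in \cref{lem:orthonormalizing the sketch for matrix queries}. One cosmetic quibble: with \cref{def:subspace embedding} as stated (norms, not squared norms), the singular values of $SU$ lie in $[1-\eps,1+\eps]$, not $[\sqrt{1-\eps},\sqrt{1+\eps}]$; this changes nothing downstream.
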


In \Cref*{lem:sketching quadratic forms}, we proved that given a matrix $U$ with orthonormal columns we can store $SU$ instead of $U$ to compute $v \mapsto ||U^\top v||_2$ while incurring a small error. However, in our use case, we do not have explicit access to $U$. Indeed, we abstractly define $U$ as a matrix whose columns are an orthonormal basis of the column space of $V$, but we only compute $U_S$ as a matrix whose columns are an orthonormal basis of the column space of $SV$, without ever paying $p\cdot k$ memory. 
\cref{lem:orthonormalizing the sketch} implies that the resulting error is controlled.

\Cref{alg:SL_alg} lists the pseudocode of our new algorithm: \algoname. 

\begin{algorithm}[ht!]
    \centering
    \caption{\algoname}
    \label{alg:SL_alg}
    \begin{algorithmic}[1]
        \STATE {\textbf{Input:}} Rank $k$, sketch matrix $S\mathbb \in \mathbb{R}^{s \times p}$, matrix-vector product function $v\mapsto Gv$ for $G\mathbb \in \mathbb{R}^{p \times p}$.
        \STATE Initialize $v_0\in\bbR^p$ as a uniformly random unit-norm vector.
        \FOR{$i$ \textbf{in} $1, \ldots k$}
            \STATE $v_i \leftarrow \textsc{LanczosIteration}(v_{i-1}, v \mapsto G v)$
            \qquad\qquad\quad \textcolor{lightgray}{(glossing over the tridiagonal detail)}
            \STATE Sketch and store $v^S_i \leftarrow Sv_i$
        \ENDFOR
        \STATE Construct the matrix $V_S=[v^S_1,\ldots,v^S_k]\in\bbR^{s\times k}$
        \STATE Orthogonalize the columns of $V_S$ and return $U_S\in\bbR^{s\times k}$
    \end{algorithmic}
\end{algorithm}

\textbf{Preconditioned \textsc{Sketched Lanczos}.}
We empirically noticed that \lomem{}' stability is quite dependent on the conditioning number of the considered matrix. From this observation, we propose a slight modification of \textsc{Sketched Lanczos} that trades some memory consumption for numerical stability.

The idea is simple, we first run \himem{} for $k_0$ iterations, obtaining an approximation of the top-$k_0$ eigenspectrum $U_0\in\bbR^{p\times k_0},\Lambda_0\in\bbR^{k_0\times k_0}$. Then we define a new matrix-vector product
\begin{equation}
    \bar{\G}v = (\G - U_0\Lambda_0 U_0^\top) v
\end{equation}
and run \textsc{Sketched Lanczos} for $k_1$ iterations on this new, better-conditioned, matrix $\bar{\G}$. This results in a $U_S\in\bbR^{s\times k_1}$ with sketched orthogonal columns. With $k=k_1+k_0$, the simple concatenation $[SU_0|U_S]\in\bbR^{(k_0+k_1)\times s}$ is a sketched orthogonal $k$-dimensional base of the top-$k$ eigenspace of $\G$, analogous to non-preconditioned \textsc{Sketched Lanczos}. The extra stability comes at a memory cost of $k_0\cdot p$, thus preventing $k_0$ from being too large.

\subsection{Sketched Lanczos Uncertainty score (\textsc{slu})}

The uncertainty score in \Cref{eq:uncertainty_quantification_score} is computed by first approximating the Generalized Gauss-Newton matrix $\G$. The approach of constructing an orthonormal basis $U\in\bbR^{p\times k}$ of the top-$k$ eigenvectors of $\G$ with relative eigenvalues $\Lambda$, leads to the low-rank approximation $\G\approx U\Lambda U^\top$. This step is done, for example, by \cite{madras2019detecting} through \himem{} and by \cite{sharma2021sketching} through truncated randomized \textsc{svd}. In this step, we employ our novel \textsc{Sketched Lanczos}. Similar to \cite{madras2019detecting}, we focus on the score with $\Mnoeig$ and thus we neglect the eigenvalues.

Having access to $U$, we can compute the score for a test datapoint $x\in\bbR^d$ as
\begin{equation}
\label{eq:score_low_rank}
\textsc{Var}[f_\theta(x)]  \approx \textsc{Tr}\left( \gradquery \cdot (\mathbb{I} - UU^\top) \cdot \gradquery^\top \right)
= \norm{\gradquery}^2_F - \norm{\gradquery U}^2_F,
\end{equation}
%
which clarifies that computing $||\gradquery U||_F$ is the challenging bit to retrieve the score in \cref{eq:uncertainty_quantification_score}. Note that $\norm{\gradquery}^2_F$ can be computed exactly with $t$ Jacobian-vector products.

\paragraph{Computing the uncertainty score through sketching.} Employing the novel \textsc{Sketched Lanczos} algorithm we can $\varepsilon$-approximate the score in \cref{eq:score_low_rank}, with only minor modifications. Running the algorithm for $k$ iterations returns a matrix $U_S\in\bbR^{s\times k}$, which we recall is the orthogonalization of $SV$ where $V\in\bbR^{p\times k}$ are the column vectors iteratively computed by Lanczos.

Having access to $U_S$, we can then compute the score for a test datapoint $x\in\bbR^d$ as
\begin{equation}
\label{eq:score_low_rank_sketched}
\textsc{slu}(x)
= \norm{\gradquery}^2_F - \norm{U_S^\top \left(S\gradquery^\top\right)}^2_F
\end{equation}
where parentheses indicate the order in which computations should be performed: first a sketch of $\gradquery$ is computed, and then it is multiplied by $U_S^\top$. \Cref{alg:SLU_score} summarizes the pipeline.

\begin{algorithm}[ht!]
    \centering
    \caption{\textsc{Sketched Lanczos} Uncertainty score (\textsc{slu})}
    \label{alg:SLU_score}
    \begin{algorithmic}[1]
        \STATE {\textbf{Input:}} Observed data $\mathcal{D}$, trained \textsc{map} parameter $\paramopt\in\bbR^p$, approximation rank $k$, sketch size $s$.
        \STATE Initialize \textsc{srft} sketch matrix $S\mathbb \in \mathbb{R}^{s \times p}$
        \STATE Initialize \ggn{}-vector product $v\mapsto \G v = \sum_{x_i\in\mathcal{D}} \J_{\paramopt}(x_i)^\top \Hess(x_i) \J_{\paramopt}(x_i)v$
        \STATE $U_S \leftarrow \textsc{Sketched Lanczos}(k, S, v\mapsto \G v)$
        \STATE At query time, for a test point $x$ return $\norm{\J_{\theta^*}(x)}^2_F - \norm{U_S^\top (S \J_{\theta^*}(x)^\top)}^2_F$
    \end{algorithmic}
\end{algorithm}

\paragraph{Approximation quality.}
Recall that the Jacobian $\gradquery$ is a $t\times p$ matrix, where $t$ is the output size of the neural network. A slight extension of \Cref{lem:orthonormalizing the sketch} (formalized in \Cref{lem:orthonormalizing the sketch for matrix queries}) implies that the score in \cref{eq:score_low_rank} is guaranteed to be well-approximated by the score in \cref{eq:score_low_rank_sketched} up to a factor $1\pm\varepsilon$ 
with probability $1-\delta$ as long as the sketch size is big enough $s = \Omega(kt \varepsilon^{-2} \log p \log(kt / \delta))$. Neglecting the log terms to develop some intuition, we can think of the sketch size to be $s\approx kt\varepsilon^{-2}$, thus resulting in an orthogonal matrix $U_S$ of size $\approx kt\eps^{-2} \times k$, which also correspond to the memory requirement. From an alternative view, we can expect the error induced by the sketching to scale as $\varepsilon\approx\sqrt{kt/s}$, naturally implying that a larger $k$ will have a larger error, and larger sketch sizes $s$ will have lower error. 
Importantly, the sketch size $s$ (and consequently the memory consumption and the error bound) depends only logarithmically on the number of parameters $p$, while the memory-saving-ratio $\nicefrac{s}{p}$ clearly improves a lot for bigger architectures.

\textbf{Memory footprint.}\,\,
The memory footprint of our algorithm is at most $4p + s(k+1)$ floating point numbers.
Indeed, the \textsc{srft} sketch matrix uses $p+s$ numbers to store $S$, whereas \lomem{} stores at most $3$ size-$p$ vectors at a time. Finally, $U_S$ is a $s \times k$ matrix. At query time, we only need to store $S$ and $U_S$, resulting in a memory footprint of $p+s(k+1)$. 
Therefore, our method is significantly less memory-intensive than the methods of \cite{madras2019detecting}, \cite{sharma2021sketching}, and all other low-rank Laplace baselines, that use $\Omega(kp)$ memory.

\textbf{Time footprint.}\,\,
The time requirement is comparable to Vanilla Lanczos. We need $k$ \textsc{srft} sketch-vector products and each takes $\mathcal{O}(p\log p)$ time, while the orthogonalization of $SV = U_S R$ through QR decomposition takes $\mathcal{O}(pk^2)$ time. 
Both Vanilla and Sketched algorithm performs $k$ \ggn-vector products and each takes $\mathcal{O}(p n)$ time, where $n$ is the size of the dataset, and that is expected to dominate the overall $\mathcal{O}(pk(\log p + k + n))$. 
Query time is also fast: sketching, Jacobian-vector product and $U_S$-vector product respectively add up to $\mathcal{O}(tp(\log p + 1) + tsk)$. 
\begin{wraptable}[5]{r}{0.6\textwidth}
    \centering
    \vspace{-1mm}
    \begin{tabular}{ r l l }
                      & {\centering  Memory }                    & {\centering Time } \\ 
        \midrule
        Preprocessing      & $4p + s(k+1)$              & $\mathcal{O}(pk(\log p + k + n))$  \\  
        Query     & $p + s(k+1)$ & $\mathcal{O}(tp(\log p + 1) + tsk)$
    \end{tabular}
    \caption{Recall that through the whole paper $p$ is number of parameters, $n$ is dataset size, $t$ is output dimensionality, $k$ is rank approximation and $s$ is sketch size.}
\end{wraptable}
Note that the linear scaling with output dimension $t$ can slow down inference for generative models.
We refer to \cite{immer2023stochastic} for the effect of considering a subset on dataset size $n$ (or on output dimension $t$).

\section{Related work}
Modern deep neural networks tend to be more overconfident than their predecessors \citep{guo2017calibration}. This motivated intensive research on uncertainty estimation. Here we survey the most relevant work.\looseness=-1

Perhaps, the simplest technique to estimate uncertainty over a classification task is to use the softmax probabilities output by the model \citep{hendrycks2016baseline}.
A more sophisticated approach, combine softmax with temperature scaling (also Platt scaling) \citep{liang2017enhancing, guo2017calibration}. 
The main benefit of these techniques is their simplicity: they do not require any computation besides inference. However, they do not extend to regression and do not make use of higher-order information.
Moreover, this type of score relies on the extrapolation capabilities of neural networks since they use predictions made far away from the training data. Thus, their poor performance is not surprising.

To alleviate this issue, an immense family of 
methods has been deployed, all sharing the same common idea of using the predictions of \emph{more than one} model, either explicitly or implicitly. A complete review of these methods is unrealistic, but the most established includes Variational inference \citep{graves2011practical, hinton1993keeping, liu2016stein}, Deep ensembles \citep{lakshminarayanan2017simple}, Monte Carlo dropout \citep{gal2016dropout, kingma2015variational} and Bayes by Backprop \citep{blundell2015weight}. 

More closely related to us, the two uncertainty quantification scores introduced in \cref{eq:uncertainty_quantification_score} with covariances $\Meig$ and $\Mnoeig$ (\cref{eq:two_covariance_matrixes}) have been already derived from Bayesian (with Laplace's approximation) and frequentist (with local perturbations) notions of underspecification, respectively:

\paragraph{Laplace's approximation.} By interpreting the loss function as an unnormalized Bayesian log-posterior distribution over the model parameters and performing a second-order Taylor expansion, Laplace's approximation \citep{mackay1992practical} results in a Gaussian approximate posterior whose covariance matrix is the loss Hessian. The linearized Laplace approximation \citep{immer2021improving, khan2019approximate} further linearize $f_{\theta}$ at a chosen weight $\paramopt$, i.e.\@ $f_{\theta}(x) \approx f_{\paramopt}(x) +\J_{\paramopt}(x)(\theta - \paramopt)$. In this setting the posterior is exactly Gaussian and the covariance is exactly $\Meig$.

\paragraph{Local perturbations.} A different, frequentist, family of methods studies the change in optimal parameter values induced by change in observed data. A consequence is that the parameter directions corresponding to functional invariance on the training set are the best candidates for OoD detection. This was directly formalized by \cite{madras2019detecting} which derives the score with covariance $\Mnoeig$, which they call a local ensemble. A similar objective is approximated by Resampling Under Uncertainty (\textsc{rue}, \citet{schulam2019can}) perturbing the training data via influence functions, and by Stochastic Weight Averaging Gaussian (\textsc{swag}, \citet{maddox2019simple}) by following stochastic gradient descent trajectories. 
%
%

\paragraph{Sketching in the deep neural networks literature.} 
Sketching techniques are not new to the deep learning community. Indeed several works used randomized \textsc{svd}  
\citep{halko2011find}, which is a popular 
algorithm \citep{tropp2023randomized}
that leverages sketch matrices to reduce dimensionality and compute an approximate truncated \textsc{svd} faster and with fewer passes over the original matrix.
It was used by \cite{antoran2022sampling} to compute a preconditioner for conjugate gradient and, similarly, by \cite{mishkin2018slang} to extend Variational Online Gauss-Newton (\textsc{vogn}) training \citep{khan2018fast}. More related to us, Sketching Curvature for
OoD Detection (\textsc{scod}, \citet{sharma2021sketching}) uses Randomized \textsc{svd} to compute exactly the score in \cref{eq:uncertainty_quantification_score} with $\Meig$, thus serving as a baseline with an alternative to Lanczos. We compare to it more extensively in \cref{sec:extended_related_work}.

\emph{To the best of our knowledge} no other work in uncertainty estimation for deep neural networks uses sketching directly to reduce the size of the data structure used for uncertainty estimation. 
Nonetheless, a recent line of work in numerical linear algebra studies how to apply sketching techniques to Krylov methods, like Lanczos or Arnoldi \citep{balabanov2022randomized, timsit2023randomized, simoncini2024stabilized, guttel2023randomized}. We believe sketching is a promising technique to be applied in uncertainty estimation and, more broadly, in Bayesian deep learning. Indeed, all the techniques based on approximating the \ggn{} could benefit from dimensionality reduction.

\section{Experiments}
\label{sec:experiments}

With a focus on memory budget, we benchmark our method against a series of methods, models, and datasets. The code for both training and testing is implemented in \textsc{jax} \citep{jax2018github} and it is publicly available\footnote{\url{https://github.com/IlMioFrizzantinoAmabile/uncertainty_quantification}}. 
Details, hyperparameters and more experiments can be found in \cref{sec:extended_experiments}. 

To evaluate the uncertainty score 
we measure the performance of out-of-distribution (OoD) detection and report the Area Under Receiver Operator Curve (\texttt{AUROC}). 
We choose \textbf{models} with increasing complexity and number of parameters: MLP, LeNet, ResNet, VisualAttentionNet and SwinTransformer architectures, with the number of parameters ranging from 15K to 200M. 
We train such models on 5 different \textbf{datasets}: \textsc{Mnist} \citep{lecun1998mnist}, \textsc{FashionMnist} \citep{xiao2017fashionmnist},  \textsc{Cifar-10} \citep{cifar10}, \textsc{CelebA} \citep{liu2015faceattributes} and \textsc{ImageNet} \citep{deng2009imagenet}. We test the score performance on a series of OoD datasets, including rotations and corruptions \citep{cifar10_corrupted} of ID datasets, as well as \textsc{Svhn} \citep{netzer2011svhn} and \textsc{Food101} \citep{bossard2014food}. \\
For \textsc{CelebA} and \textsc{ImageNet} we hold out some classes from training and use them as OoD datasets.

We compare to the most relevant \textbf{methods} in literature: Linearized Laplace Approximation with a low-rank structure (\textsc{lla}) \citep{immer2021improving} and Local Ensemble (\textsc{le}) are the most similar to us. 
We also consider Laplace with diagonal structure (\textsc{lla-d}) and Local Ensemble Hessian variant (\textsc{le-h}) \citep{madras2019detecting}. 
Another approach using randomized \textsc{svd} instead of Lanczos is Sketching Curvature for OoD Detection (\textsc{scod}) \citep{sharma2021sketching}, which also serves as a Lanczos baseline. Lastly, we include \textsc{swag} \citep{maddox2019simple} and Deep Ensemble (\textsc{de}) \citep{lakshminarayanan2017simple}.\looseness=-1


\begin{figure}[ht]
    \centering    \includegraphics[width=0.32\textwidth]{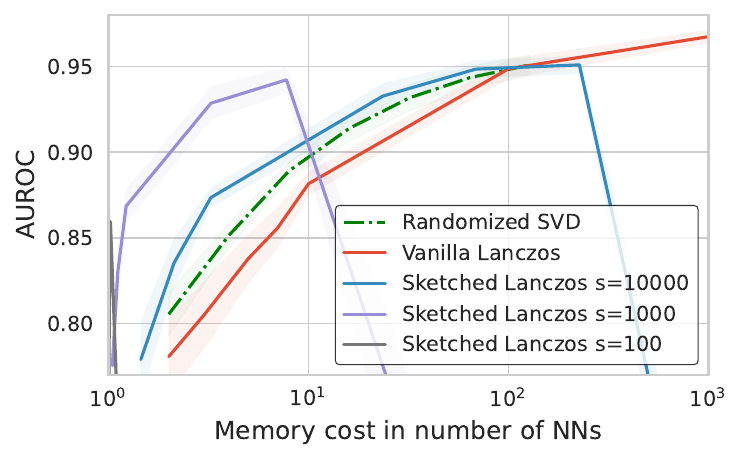}
    \includegraphics[width=0.32\textwidth]{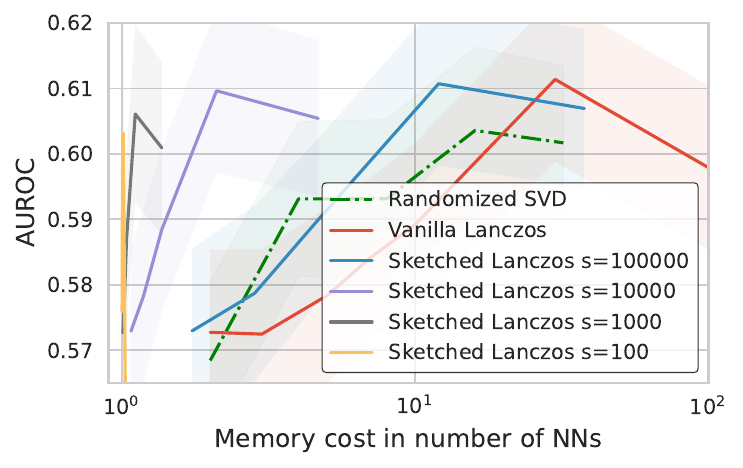}
    \includegraphics[width=0.32\textwidth]{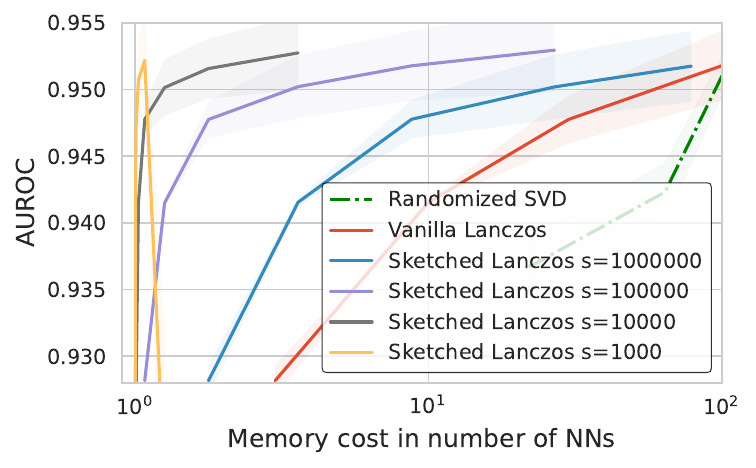}
\caption{Sketch sizes $s$ comparison for: LeNet $p=40$K on \textsc{FashionMnist} vs \textsc{Mnist} (left), ResNet $p=300$K on \textsc{Cifar-10} vs \textsc{Cifar}-corrupted with defocus blur (center), and VisualAttentionNet $p=4$M on \textsc{CelebA} vs \textsc{Food101} (right). The lower the ratio $\nicefrac{s}{p}$, the stronger the memory efficiency.}
\label{fig:sketch_size_comparison}
\end{figure}

\textbf{Effect of different sketch sizes.} 
We expect the error induced by the sketching to scale as $\epsilon\approx\sqrt{k/s}$, thus a larger $k$ will have a larger error, and larger sketch sizes $s$ will have a lower error, independently on number of parameters up to log-terms. On the other hand, a 
larger parameter count will have a better memory-saving-ratio $\nicefrac{s}{p}$, leading to an advantage for bigger architectures as shown in \cref{fig:sketch_size_comparison}.

\begin{figure}[h!]
  \centering
  \begin{tabular}[c]{cc}
    \begin{subfigure}[c]{0.45\textwidth}
    \hspace{-17pt}
      \includegraphics[width=\linewidth]{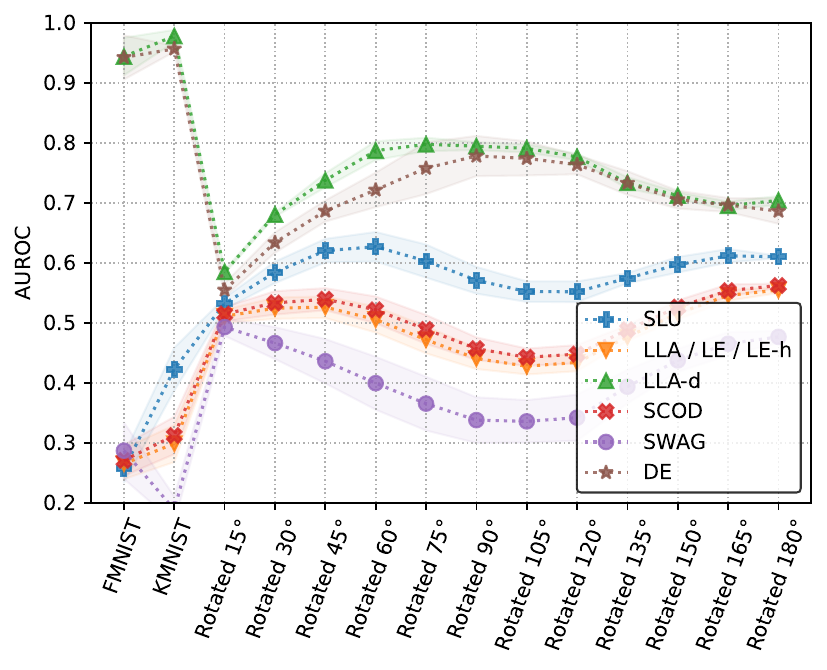}
      \vspace{-7pt}
      \caption{MLP $p=15$K trained on \textsc{Mnist}}
      \label{fig:ceoa}
    \end{subfigure}
    &
    \begin{subfigure}[c]{0.45\textwidth}
    \hspace{-3pt}
      \includegraphics[width=\linewidth]{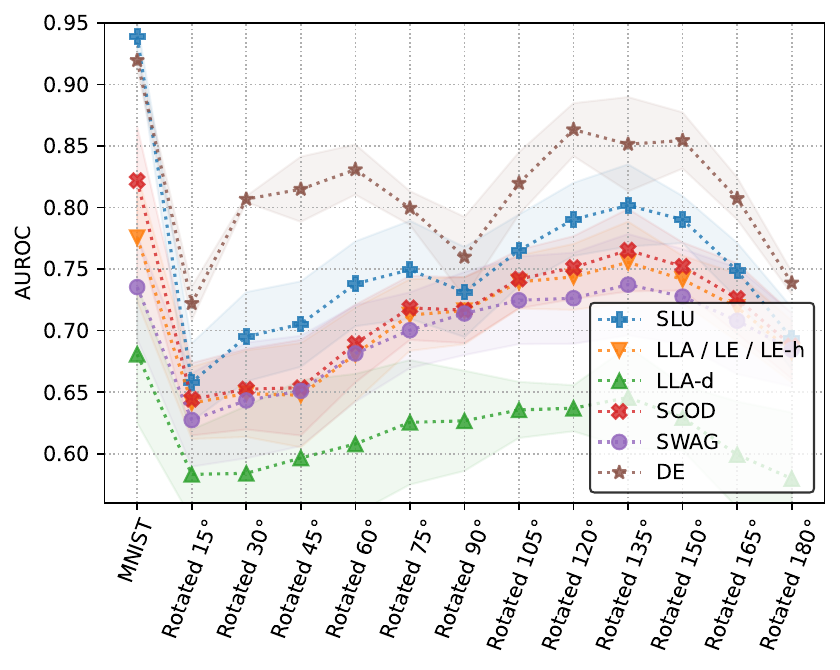}
      \vspace{-20pt}
      \caption{LeNet $p=40$K trained on \textsc{FashionMnist}}
      \label{fig:ceob}
    \end{subfigure}
  \end{tabular}   
    \begin{subfigure}{0.95\textwidth}
    \hspace{-10pt}
      \includegraphics[width=\linewidth]{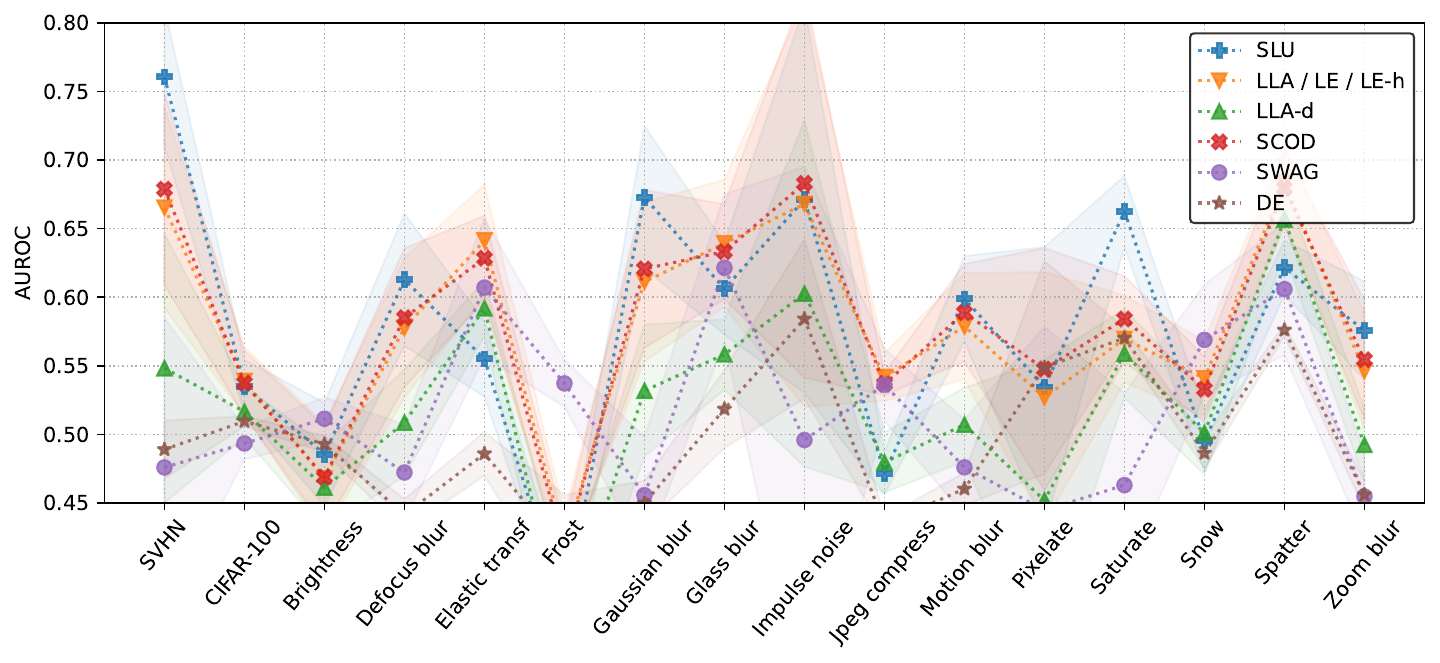}
      \vspace{-9pt}
      \caption{ResNet $p=300$K trained on \textsc{Cifar-10}}
      \label{fig:ceoc}
    \end{subfigure}
  \begin{tabular}[c]{cc}
    \begin{subfigure}[c]{0.43\textwidth}
    \vspace{2pt}
    \hspace{-15pt}
      \includegraphics[width=\linewidth]{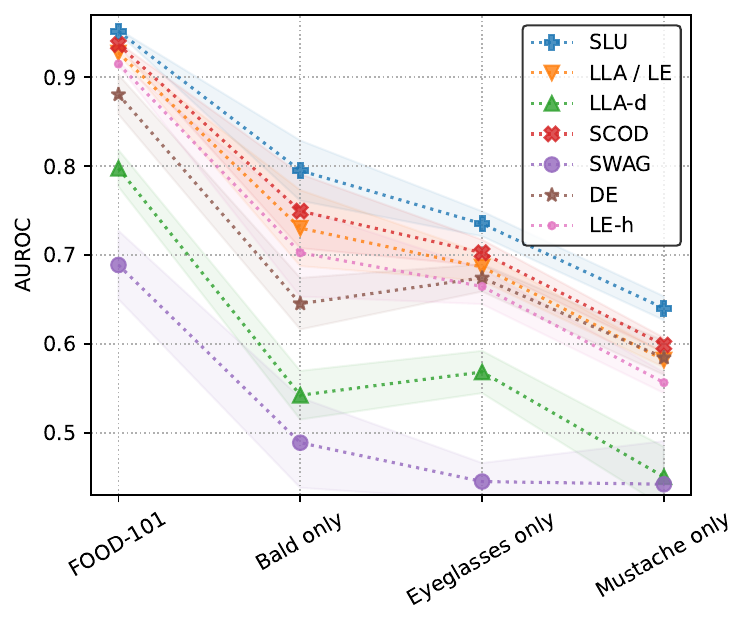}
      \vspace{-2pt}
      \caption{VisualAttention $p=4$M trained on \textsc{CelebA}}
      \label{fig:ceod}
    \end{subfigure}
    & 
    \begin{subfigure}[c]{0.555\textwidth}
    \vspace{3pt}
    \hspace{-15pt}
      \includegraphics[width=\linewidth]{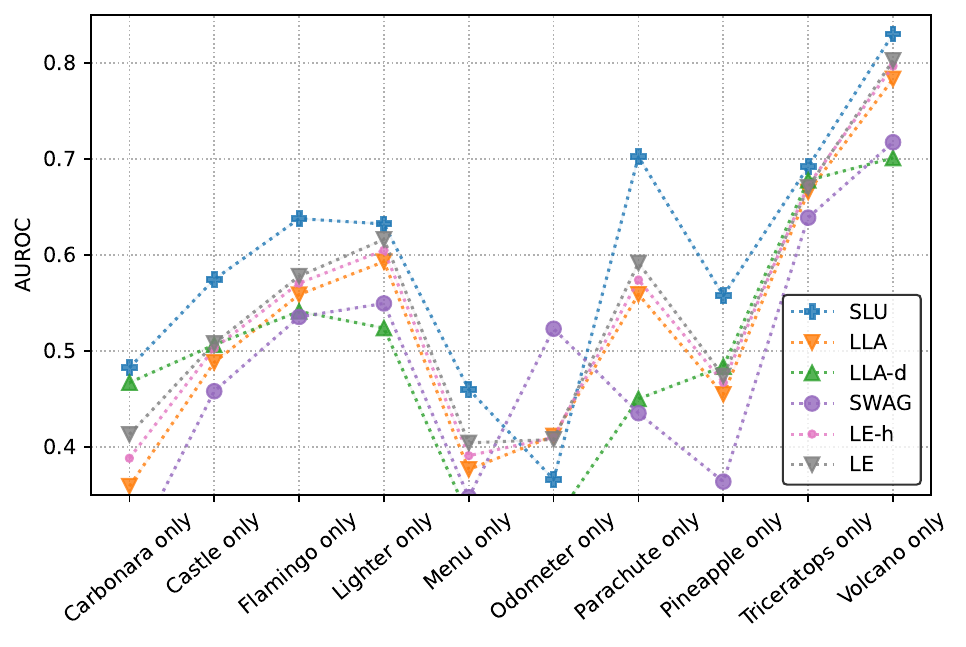}
      \vspace{-7pt}
      \caption{SWIN $p=200$M trained on \textsc{ImageNet}}
      \label{fig:ceoe}
    \end{subfigure}
  \end{tabular}    
  \caption{\texttt{AUROC} scores of \textsc{Sketched Lanczos} Uncertainty vs baselines with memory budget $3p$. \\
  \textsc{slu} outperforms the baselines on several choices of ID (\ref{fig:ceoa}, \ref{fig:ceob}, \ref{fig:ceoc}, \ref{fig:ceod}, \ref{fig:ceoe}) and OoD (x-axis) datasets pairs. Dashed lines are for improved visualization only; see \cref{tab:results} for values and standard deviations.
  Plots \ref{fig:ceoa}, \ref{fig:ceob}, \ref{fig:ceoc}, \ref{fig:ceod}, \ref{fig:ceoe} are averaged respectively over 10, 10, 5, 3, 1 independently trained models.}
  \label{fig:results}
\end{figure}

\begin{table}[H]
    \centering
    \resizebox{\textwidth}{!}{
    \begin{tabular}{l|ccc|cc|ccc}
        \hspace{4pt}\emph{model} &   \multicolumn{3}{c|}{MLP $p=15$K} & \multicolumn{2}{c|}{LeNet $p=40$K} & \multicolumn{3}{c}{ResNet $p=200$K}\\
        &   \multicolumn{8}{c}{\vspace{-7pt}} \\
        \hspace{2pt}\emph{ID data} &   \multicolumn{3}{c|}{\textsc{MNIST} \tiny{vs}} & \multicolumn{2}{c|}{\textsc{FashionMNIST} \tiny{vs}} & \multicolumn{3}{c}{\textsc{CIFAR-10} \tiny{vs}}\\
        \hspace{-2pt}\emph{OoD data} & \hspace{-3pt}\textsc{FashionMNIST}\hspace{-10pt} & \textsc{KMNIST} & Rotation (avg) & \textsc{MNIST} & Rotation (avg) & \textsc{SVHN} & \textsc{CIFAR-100} & \hspace{-10pt}Corrupt (avg)\hspace{-10pt} \\ \midrule
        \textsc{slu} (us) & 0.26 $\pm$ 0.02 & 0.42 $\pm$ 0.04 & 0.59 $\pm$ 0.02 & \bftab{0.94 $\pm$ 0.01} & 0.74 $\pm$ 0.03 & \bftab{0.76 $\pm$ 0.05} & \bftab{0.54 $\pm$ 0.02} & 0.57 $\pm$ 0.04 \\
        \textsc{lla} & 0.27 $\pm$ 0.03 & 0.30 $\pm$ 0.03 & 0.49 $\pm$ 0.01 & 0.80 $\pm$ 0.03 & 0.70 $\pm$ 0.03 & 0.67 $\pm$ 0.07 & \bftab{0.54 $\pm$ 0.02} & 0.57 $\pm$ 0.05 \\
        \textsc{lla-d} & \bftab{0.94 $\pm$ 0.03} & \bftab{0.98 $\pm$ 0.01} & \bftab{0.73 $\pm$ 0.01} & 0.68 $\pm$ 0.07 & 0.61 $\pm$ 0.05 & 0.55 $\pm$ 0.10 & 0.52 $\pm$ 0.02 & 0.52 $\pm$ 0.04 \\
        \textsc{le} & 0.27 $\pm$ 0.03 & 0.30 $\pm$ 0.03 & 0.49 $\pm$ 0.01 & 0.80 $\pm$ 0.03 & 0.70 $\pm$ 0.03 & 0.67 $\pm$ 0.07 & \bftab{0.54 $\pm$ 0.02} & 0.57 $\pm$ 0.05 \\
        \textsc{le-h} & 0.27 $\pm$ 0.03 & 0.30 $\pm$ 0.03 & 0.49 $\pm$ 0.01 & 0.80 $\pm$ 0.03 & 0.70 $\pm$ 0.03 & 0.66 $\pm$ 0.07 & \bftab{0.54 $\pm$ 0.02} & 0.57 $\pm$ 0.05 \\
        \textsc{scod} & 0.27 $\pm$ 0.03 & 0.31 $\pm$ 0.03 & 0.51 $\pm$ 0.01 & 0.84 $\pm$ 0.02 & 0.71 $\pm$ 0.03 & 0.68 $\pm$ 0.07 & \bftab{0.54 $\pm$ 0.02} & \bftab{0.58 $\pm$ 0.04} \\
        \textsc{swag} & 0.29 $\pm$ 0.05 & 0.19 $\pm$ 0.02 & 0.41 $\pm$ 0.03 & 0.75 $\pm$ 0.02 & 0.70 $\pm$ 0.04 & 0.48 $\pm$ 0.11 & 0.49 $\pm$ 0.01 & 0.52 $\pm$ 0.06 \\
        \textsc{de} & \bftab{0.94 $\pm$ 0.04} & 0.96 $\pm$ 0.00 & 0.71 $\pm$ 0.02 & 0.92 $\pm$ 0.01 & \bftab{0.79 $\pm$ 0.02} & 0.46 $\pm$ 0.02 & 0.51 $\pm$ 0.00 & 0.50 $\pm$ 0.01 \\
    \end{tabular}}
    
    \vspace{5pt}
    \resizebox{\textwidth}{!}{
    \begin{tabular}{l|lc|cccccccc}
        \hspace{4pt}\emph{model} & \multicolumn{2}{c|}{VisualAttentionNet $p=4$M} & \multicolumn{8}{c}{SWIN $p=200$M}\\
        & \multicolumn{10}{c}{\vspace{-7pt}} \\
        \hspace{2pt}\emph{ID data} & \multicolumn{2}{c|}{\textsc{CelebA} \tiny{vs}} & \multicolumn{8}{c}{\textsc{ImageNet} \tiny{vs}}\\
        \hspace{-2pt}\emph{OoD data} & \hspace{1pt}FOOD-101 & \hspace{-2pt}Hold-out (avg) & 
        Castle & Flamingo & Lighter & Odometer & Parachute & Pineapple & Triceratops & Volcano
        \\ \midrule
        \textsc{slu} (us) & \bftab{0.95 $\pm$ 0.003} & \bftab{0.72 $\pm$ 0.02} & \bftab{0.57}  & \bftab{0.64}  & \bftab{0.63}  & 0.37  & \bftab{0.70}  & \bftab{0.56}  & \bftab{0.69}  & \bftab{0.83} \\
        \textsc{lla} & 0.93 $\pm$ 0.001 & 0.67 $\pm$ 0.02 & 0.49  & 0.56  & 0.59  & 0.41  & 0.56  & 0.45  & 0.67  & 0.78 \\
        \textsc{lla-d} & 0.80 $\pm$ 0.02 & 0.52 $\pm$ 0.03 & 0.51  & 0.54  & 0.52  & 0.32  & 0.45  & 0.48  & 0.68  & 0.70 \\
        \textsc{le} & 0.93 $\pm$ 0.001 & 0.67 $\pm$ 0.02 & 0.51  & 0.58  & 0.62  & 0.41  & 0.59  & 0.47  & 0.67  & 0.80 \\
        \textsc{le-h} & 0.91 $\pm$ 0.002 & 0.64 $\pm$ 0.03 & 0.50  & 0.57  & 0.60  & 0.41  & 0.57  & 0.47  & 0.67  & 0.80 \\
        \textsc{scod} & 0.94 $\pm$ 0.00 & 0.68 $\pm$ 0.02 & na & na & na & na & na & na & na & na \\
        \textsc{swag} & 0.69 $\pm$ 0.04 & 0.46 $\pm$ 0.04 & 0.46  & 0.54  & 0.55  & \bftab{0.52}  & 0.44  & 0.36  & 0.64  & 0.72 \\
        \textsc{de} & 0.88 $\pm$ 0.02 & 0.63 $\pm$ 0.02 & na & na & na & na & na & na & na & na \\
    \end{tabular}}
    \vspace{5pt}
    \caption{\texttt{AUROC} scores of \textsc{Sketched Lanczos} Uncertainty vs baselines with memory budget of $3p$.\\
    Results for all OoD datasets are illustrated more extensively in \cref{fig:results}. Mean values and standard deviations for each table block obtained over 10, 10, 5, 3, 1 independently trained models, respectively.}
    \label{tab:results}
    \vspace{-0.5\baselineskip}
\end{table}


\paragraph{Summary of the experiments.} For \emph{most of} the ID-OoD dataset pairs we tested, our \textsc{Sketched Lanczos} Uncertainty outperforms the baselines, as shown in \Cref{tab:results} and more extensively in \cref{fig:results} where we fix the memory budget to be $3p$. 
Deep Ensemble performs very well in the small architecture but progressively deteriorates for bigger parameter sizes. \textsc{swag} outperforms the other methods on some specific choices of \textsc{Cifar-10} corruptions, but we found this method to be extremely dependent on the choice of hyperparameters. \textsc{scod} is also a strong baseline in some settings, but we highlight that it requires instantiating the full Jacobian with an actual memory requirement of $tp$. These memory requirements make \textsc{scod} inapplicable to ImageNet. In this setting, given the significant training time, also \textsc{Deep Ensemble} becomes not applicable.

The budget of $3p$ is an arbitrary choice, but the results are consistent with different values. More experiments, including a $10p$ memory budget setting, a study on the effect of preconditioning, and a synthetic-data ablation on the trade-off sketch size vs low rank, are presented in \cref{sec:experiment_appendix}.




\section{Conclusion}
We have introduced \algoname, a powerful memory-efficient technique to compute approximate matrix eigendecompositions. We take a first step in exploiting this technique showing that sketching the top eigenvectors of the Generalized Gauss-Newton matrix leads to high-quality scalable uncertainty measure. We empirically show the superiority of the \textsc{Sketched Lanczos} Uncertainty score (\textsc{slu}) among a variety of baselines in the low-memory-budget setting, where the assumption is that the network has so many parameters that we can only store a few copies.

\paragraph{Limitations.}
The data structure produced by Sketched Lanczos is sufficiently rich to evaluate the predictive variance, and consequently the uncertainty score. However, from a Bayesian point of view, it is worth noting that the method does not allow us to sample according to the posterior.

\begin{ack}
The work was partly funded by the Novo Nordisk Foundation through the Center for Basic Machine Learning Research in Life Science (NNF20OC0062606). It also received funding from the European Research Council (ERC) under the European Union’s Horizon program (101125993), and from a research grant (42062) from VILLUM FONDEN. The authors acknowledge the Pioneer Centre for AI, DNRF grant P1. The authors also acknowledge Scuola Normale Superiore of Pisa.

We thank Marcel Schweitzer for pointing out some highly relevant literature on applying sketching to Krylov methods carried out by the numerical linear algebra community.
\end{ack}


\bibliography{literature}
\bibliographystyle{plainnat}

\newpage
\appendix
\onecolumn

\section{Dimensionality Reduction}
\label{sec:appendix dim reduction}

\subsection{Proof of ``Sketching low-rank matrices" Lemma}

In this section, we prove \Cref{lem:sketching quadratic forms}. We restate it below for convenience.
\sketchingLowRankMat*


\begin{proof}
In order to prove that $||(SU)^\top (Sv)||_2 = ||U^\top v||_2 \pm \varepsilon$ with probability $\geq 1-\delta$ we prove the following stronger statement. Denote with $[u]_i$ the $i$-th coordinae of $u$, then for each $i=1\dots k$ we have $[(SU)^\top (Sv)]_i = [U^\top v]_i \pm \varepsilon / \sqrt{k}$ with probability at least $1- \delta / k$. Apparently, the former follows from the latter by union bound.

Notice that $[(SU)^\top (Sv)]_i = (SU^i)^\top(Sv)$, where $U^i$ is the $i$-th column of $U$. Moreover, as long as $s = \Omega(k\varepsilon^{-2} \cdot \log p \cdot \log(k / \delta))$, then $S$ is a $(1\pm \varepsilon / \sqrt{k})$-subspace embedding for the $2$-dimensional subspace spanned by $v$ and $U^i$ with probability $1-\delta / k$ (\Cref{thm:woodruff subspace}).
Therefore, conditioning on this event:

\begin{align*}
(SU^i)^\top(Sv) &= \frac{1}{4} \left(||S(U^i + v)||^2_2 - ||S(U^i - v)||^2_2\right) \\
&= \frac{1}{4} \left(||(U^i + v)||^2_2 - ||(U^i - v)||^2_2\right) \pm O(\varepsilon / \sqrt k) \\
&= (U^i)^\top v \pm O(\varepsilon / \sqrt k) \\
&= [U^\top v]_i \pm O(\varepsilon / \sqrt k).
\end{align*}
The second equality sign holds because of the subspace embedding property, together with $||U^i + v||_2, ||U^i - v||_2 = O(1)$.
\end{proof}

\subsection{Proof of ``Orthogonalizing the sketch" Lemma}

In this section, we prove \Cref{lem:orthonormalizing the sketch}. We restate it below for convenience.

\orthogonalizingsketch*
Essentially, the proof of \Cref{lem:orthonormalizing the sketch} was already known, see for example Corollary 2.2 in \cite{balabanov2022randomized}. Nonetheless, we include a short proof for completeness' sake.

\begin{proof}
Since $V$ is full-rank, it is easy to verify that $SV$ is also full rank with high probability. Thus, $R$ is non-singular and we can define $\bar U = V R_S^{-1}$. Notice that $U_S = S \bar U$. Now, we prove that
\begin{equation}
\label{eq:orth sketch}
\norm{U U^\top - \bar U \bar U^\top }_2 \leq  \varepsilon
\end{equation}
where $\norm{\cdot}_2$ is the operator norm.
By definition of $U_S$, we have $I_k = U_S^\top U_S = \bar U S^\top S \bar U^\top$
By subspace embedding property of $S$, we have that
\[
\norm{\bar U \bar U^\top - I_k}_2 = \norm{\bar U \bar U^\top - \bar U S^\top S \bar U^\top}_2 \leq \eps 
\]
with probability $1-\delta$. Conditioning on this event,  the singular values of $\bar U$ lie in $[1-\eps, 1+\eps]$. Moreover, $Ran(U) = Ran(\bar U)$.
Therefore, taking the singular value decomposition (SVD) of $U$ and $\bar U$ yields \Cref{eq:orth sketch}.

Now we are left to prove that $\norm{\bar U^\top v }_2 = \norm{U_S^\top S v}_2 \pm \varepsilon$. We have $U_S^\top S = \bar U^\top S^\top S$ and by \Cref{lem:sketching quadratic forms} we have $\norm{\bar U^\top S^\top S v}_2 = \norm{\bar U^\top v}_2 \pm \eps$ with probability $1- \delta$. Thus, $\norm{\bar U^\top v}_2 = \norm{U_S^\top S v}_2 \pm \eps$ and combined with \Cref{eq:orth sketch} gives \Cref{eq:proj norm}, up to constant blow-ups in $\eps$ and $\delta$.
\end{proof}

\subsection{Extension of \Cref{lem:orthonormalizing the sketch}}
In this section we extend \Cref{lem:orthonormalizing the sketch} to the case where instead of having a query vector $v \in \mathbb R^p$ we have a query matrix $J \in \mathbb R^{p \times t}$. In our application, $J$ is the Jacobian $\gradquery$.

\begin{restatable}[Orthogonalizing the sketch, for matrix queries]{lemma}{orthogonalizingsketchmatrices}
\label{lem:orthonormalizing the sketch for matrix queries}
Fix $0 < \eps, \delta < 1/2$ and sample a random $s \times p$ SRFT matrix $S$. As long as $s = \Omega(t k\varepsilon^{-2} \cdot \log p \cdot \log(t k / \delta))$ the following holds with probability $1-\delta$. 

Given any $p \times k$ full-rank matrix $V$, decompose $V = U R$ and $S V = U_S R_S$ so that $U \in \bbR^{p \times k}$, $U_S\in\bbR^{s \times k}$ and both $U$ and $U_S$ have orthonormal columns. For any $J \in \bbR^{p \times t}$ we have 
\begin{equation}
\label{eq:proj norm jacobian}
\norm{J}^2_F - \norm{U_S^\top (SJ)}^2_F = (1\pm \varepsilon) \norm{J}^2_F - \norm{U^\top J}^2_F.
\end{equation}
\end{restatable}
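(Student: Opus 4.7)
The strategy is to reduce the matrix-query statement to the vector-query version (\Cref{lem:orthonormalizing the sketch}) applied independently to each column of $J$, and then sum squared errors. First I would rearrange the claimed equality. Subtracting $\|J\|_F^2$ from both sides shows that \Cref{eq:proj norm jacobian} is equivalent to
\begin{equation*}
\bigl\lvert \norm{U_S^\top (SJ)}_F^2 - \norm{U^\top J}_F^2 \bigr\rvert \;\leq\; \varepsilon \norm{J}_F^2,
\end{equation*}
so it suffices to prove this additive, Frobenius-norm-weighted approximation. This is the natural matrix-query analogue of the vector-query bound.

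Next I would write $J = [J_1 \mid \ldots \mid J_t]$ with columns $J_j \in \mathbb{R}^p$ and apply \Cref{lem:orthonormalizing the sketch} once for every column. More precisely, for each $j$ I run the lemma on the unit vector $J_j / \norm{J_j}_2$ (if $J_j = 0$ the term is trivial), using error parameter $\varepsilon' = \varepsilon/3$ and failure parameter $\delta/t$. The hypothesis of \Cref{lem:orthonormalizing the sketch} is then satisfied since the stated sketch size $s = \Omega(tk\varepsilon^{-2}\log p \log(tk/\delta))$ dominates $\Omega(k\varepsilon'^{-2}\log p \log(k/(\delta/t)))$. A union bound over the $t$ columns yields, with probability at least $1-\delta$, the simultaneous estimates
\begin{equation*}
\norm{U_S^\top S J_j}_2 \;=\; \norm{U^\top J_j}_2 \;\pm\; \varepsilon' \norm{J_j}_2, \qquad j = 1,\ldots,t.
\end{equation*}

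Then I would square each of these estimates. Writing $a_j = \norm{U^\top J_j}_2$ and expanding $(a_j \pm \varepsilon'\norm{J_j}_2)^2$, and using that $\norm{U^\top J_j}_2 \leq \norm{J_j}_2$ because $U$ has orthonormal columns, each column contributes an error of at most $(2\varepsilon' + \varepsilon'^2)\norm{J_j}_2^2 \leq \varepsilon \norm{J_j}_2^2$ by the choice $\varepsilon' = \varepsilon/3$. Summing over $j = 1,\ldots,t$ and recognizing $\sum_j \norm{J_j}_2^2 = \norm{J}_F^2$ gives $\bigl\lvert \norm{U_S^\top SJ}_F^2 - \norm{U^\top J}_F^2 \bigr\rvert \leq \varepsilon \norm{J}_F^2$, which is the rearranged form of \Cref{eq:proj norm jacobian}.

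The main obstacle is mostly bookkeeping rather than a genuine difficulty: one has to verify that the budget distribution, namely spending $\varepsilon' = \Theta(\varepsilon)$ on each column and $\delta/t$ failure mass each, is consistent with the stated sketch size, and that linearizing the square roots (rather than estimating the squared quantities directly) does not cost an extra $\sqrt{t}$ factor. The key observation making everything clean is that cross-terms in the square are controlled via the trivial bound $\norm{U^\top J_j}_2 \leq \norm{J_j}_2$, so no relative error depending on the magnitude of $\norm{U^\top J_j}_2$ appears.
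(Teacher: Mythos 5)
Your proof is correct, and it follows the same high-level route as the paper's: split $J$ into columns, apply \Cref{lem:orthonormalizing the sketch} to each (normalized) column with failure probability $\delta/t$, and union bound. The difference is in how the per-column errors are combined. The paper keeps the unsquared Frobenius norms, factors $|a^2-b^2|=(a+b)|a-b|$ at the matrix level, and controls $|\,\norm{U_S^\top (SJ)}_F-\norm{U^\top J}_F|$ by the $\ell_2$-norm of the vector of per-column deviations; this forces the per-column error parameter to be $\eps/\sqrt{t}$, which is exactly where the factor $t$ in the stated sketch size $s=\Omega(tk\eps^{-2}\log p\log(tk/\delta))$ comes from. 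You instead square each column estimate directly and use $\norm{U^\top J_j}_2\le\norm{J_j}_2$ to bound the cross term, so each column only needs relative error $\eps'=\Theta(\eps)$ and the squared errors sum to $\eps\norm{J}_F^2$ on their own. The net effect is that your argument establishes the same conclusion under the weaker requirement $s=\Omega(k\eps^{-2}\log p\log(tk/\delta))$, i.e.\ it removes a factor of $t$ from the sketch size (only the $\log$ term retains the $t$ from the union bound). This is a genuine, if modest, sharpening of the lemma as stated; all the steps you flag as bookkeeping (budget distribution, the contraction bound on $U^\top$, and the equivalence of \Cref{eq:proj norm jacobian} with the additive form $|\,\norm{U_S^\top(SJ)}_F^2-\norm{U^\top J}_F^2|\le\eps\norm{J}_F^2$) check out.
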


\begin{proof}
Let $J^i$ the $i$-th column of $J$. Suppose that the following intermediate statement holds: for each $i$, $\norm{U_S^\top (SJ^i)}_2 = \norm{U^\top J^i}_2 \pm \norm{J}_F \cdot \varepsilon / \sqrt{t}$ with probability $1 - \delta / t$. If the statement above holds, then we have
\begin{align*}
|\norm{U_S^\top (SJ)}^2_F - \norm{U^\top J}^2_F| &= \\
(\norm{U_S^\top (SJ)}_F + \norm{U^\top J}_F) \cdot |\norm{U_S^\top (SJ)}_F - \norm{U^\top J}_F| &\leq \\
O(\norm{J}_F) \cdot \sqrt{ \sum_i (\norm{U_S^\top (SJ^i)}_2 - \norm{U^\top J^i}_2)^2} &\leq \\
O(\norm{J}_F^2 \cdot \varepsilon).
\end{align*}
The penultimate inequality holds because of triangle inequality, whereas the last inequality holds with probability $1-\delta$ by union bound.

To prove the intermediate statement above, it is sufficient to apply
\Cref{lem:orthonormalizing the sketch} with $\varepsilon' = \varepsilon / \sqrt{t}$ and $\delta' = \delta / t$. 
\end{proof}

\section{Extended Related Work}
\label{sec:extended_related_work}

\paragraph{Local ensembles.}
In \cite{madras2019detecting}, they consider \emph{local ensembles}, a computationally efficient way to detect underdetermination without training an actual ensemble of models. 
A local ensemble is a distribution over models obtained by perturbing the parameter along zero-curvature directions. Small parameter perturbations along zero-curvature directions cause small perturbation of the model prediction. 
Given a test point, \cite{madras2019detecting} define the local ensemble \textit{score} as the first-order term of the predictive variance for small perturbations along zero-curvature directions.

Moreover, in \cite{madras2019detecting} they prove that the aforementioned score is equivalent to  $\norm{\Pi_k \gradquery}_F $
where $\Pi_k$ is a projection onto a subspace defined as the orthogonal of the zero-curvature subspace at $\paramopt$, $\gradquery$ is the Jacobian of the test datapoint and $\norm{\cdot}_F$ is the Frobenius norm.  
Computationally, they approximate their score by setting $\Pi_k = I - U_k U^\top_k$, where $U_k$ is a $p \times k$ matrix which columns are the top-$k$ eigenvectors of the Hessian of the loss $\cL(\theta)$.

Their method uses Lanczos algorithm to compute the top-$k$ eigenvectors of the Hessian and uses $k \cdot p$ memory.
This is costly, indeed \cite{madras2019detecting} write
\begin{quote}
\textit{``The main constraint of our method is space rather than time — while estimating the first $k$ eigenvectors enables easy caching for later use, it may be difficult to work with these eigenvectors in memory as $k$ and
model size $p$ increase. \textelp{} we note in some cases that increasing $k$ further could have improved performance \textelp{}  This suggests that
further work on techniques for mitigating this tradeof.''}
\end{quote}

\paragraph{Comparison with \textsc{scod}.}
\textsc{scod}, the uncertainty estimation score introduced in \cite{sharma2021sketching}, has gained traction lately and it has been used as an uncertainty estimation baseline in applications \cite{qiao2023advscod, elhafsi2023semantic}.
\textsc{scod} can be interpreted as a Bayesian method that approximates the posterior with a Gaussian having a covariance matrix of the form identity + low-rank.
Computing such approximation of the variance boils down to computing a low-rank approximation of the inverse of $F + \alpha I$, where $F$ is the Fisher information matrix and $\alpha$ depends on the variance of the isotropic Gaussian prior on $\theta$.
Ultimately, the \textsc{scod} score is defined as $\norm{(I -  U_k\Lambda U_k^\top) \gradquery}_F$, where $U_k$ is a $p \times k$ matrix which columns are the top-$k$ eigenvectors of $F$ and $\Lambda$ is a $k \times k$ diagonal matrix such that $\Lambda_{ii} = \lambda_i / (\lambda_i + \alpha)$. 

\textsc{scod} and Local Ensembles scores are extremely similar.
Indeed, the only difference between their scores is that \textsc{scod} weighs projection onto different eigenvectors according to $\Lambda$. However, in \cite{sharma2021sketching} they observe empirically that different values of $\alpha$ yield the same score accuracy, hinting that replacing $\Lambda$ with the identity (that essentially gives local ensembles score) should have a modest impact on accuracy.
From a computational point of view, both methods compute the top-$k$ eigenvectors of the Hessian / Fisher accessing it solely through matrix-vector product.
Local ensembles achieves that using Lanczos algorithm, whereas \textsc{scod} uses truncated randomized singular values decomposition (SVD). In both cases the space complexity $k \cdot p$.
Similarly to \cite{madras2019detecting}, also \cite{sharma2021sketching} observed that the most prominent limiting factor for their method was memory usage\footnote{The original version statest that the memory footprint is $O(Tp)$, where $T \geq k$ is a parameter of their SVD algorithm.}:
\begin{quote}
\textit{``The memory footprint of the offline stage of \textsc{scod} is still linear, i.e., $O(kp)$. As a result, GPU memory constraints can restrict $k$ \textelp{} substantially for large models.'' }
\end{quote}

\paragraph{GGN / Fisher vs Hessian.}
Local Ensembles uses computes the top-$k$ eigenvalue of the Hessian of $\cL(\paramopt)$, whereas \cite{sharma2021sketching, immer2021improving} employs the Fisher / \ggn.
Implementing matrix-vector product for both Hessian and Fisher /\ggn\, is straightforward, thanks to the power of modern software for automatic differentiation. 
However, we believe that using the \ggn\, / Fisher is more appropriate in this context. Indeed, the latter is guaranteed to be positive semi-definite (PSD), whereas the Hessian is not necessarily PSD for non-convex landscapes.

\section{Lanczos algorithm}
\label{sec:appendix_lanczos}

Lanczos algorithm is an iterative method for tridiagonalizing an Hermitian\footnote{In this work, we are only concerned with real symmetric matrices.} matrix $G$. 
If stopped at iteration $k$, Lanczos returns a column-orthogonal matrix $V = [V_1| \dots |V_k] \in \bbR^{p \times k}$ and a tridiagonal matrix $T \in \bbR^{k \times k}$ such that $V^\top G V = T$.
The range space of $V$ corresponds to the Krylov subspace $\cK_k = span\{v, G v, \dots , G^{k-1} v \}$, where $v = V_1$ is a randomly chosen vector. Provably 
$\cK_k$ approximates the eigenspace spanned by the top-$k$ eigenvectors (those corresponding to eigenvalues of largest modulus) of $G$.
Thus, $V T V^\top$ approximates the projection of $G$ onto its top-$k$ eigenspace. Notice that projecting $G$ onto its top-$k$ eigenspace yields the best rank-$k$ approximation of $G$ under any unitarily-invariant norm \cite{mirsky1960symmetric}. Moreover, as observed in the previous section the spectrum of the \ggn\, decays rapidly, making low-rank decomposition particularly accurate.

Once the decomposition $G \approx V T V^\top$ is available, we can retrieve an approximation to the top-$k$ eigenpairs of $G$ by diagonalizing $T$ into $T = W \Lambda W^\top$, which can be done efficiently for tridiagonal matrices \cite{dhillon1997new}. The quality of eigenpairs' approximation has been studied in theory and practice. We point the reader to \cite{meurant2006lanczos, cullum2002lanczos} for a comprehensive sourvey on this topic.

\paragraph{Lanczos, in a nutshell.}
Here we give a minimal description of Lanczos algorithm.
Lanczos maintains a set of vectors $V=[V_1| \dots | V_k]$, where $V_1$ is initialized at random. 
At iteration $i + 1$, Lanczos performs a matrix-vector product $V_{i+1} \leftarrow G \cdot V_i$, orthogonalizes $V_{i+1}$ against $V_i$ and $V_{i-1}$ and normalizes it. The tridiagonal entries of $T$ are given by coefficients computed during orthogonalization and normalization.    
See Chapter 1 of \cite{meurant2006lanczos} for a full description of Lanczos algorithm.

\paragraph{The benefits of Lanczos.}
Lanczos has two features that make it particularly appealing for our uses case. 
First, Lanczos does not need explicit access to the input matrix $G$, but only access to an implementation of $u \mapsto G u$.
Second, Lanczos uses a small working space: only $3p$ floating point numbers, where the input matrix is $p \times p$. Indeed, we can think of Lanczos as releasing its output in streaming and only storing a tiny state consisting of the last three vectors $V_{i-1}, V_i$ and $V_{i+1}$.

\paragraph{The curse of numerical instability.}
Unfortunately, the implementation of Lanczos described above is prone to numerical instability, causing $V_1 \dots V_k$ to be far from orthogonal. A careful analysis of the rounding errors causing this pathology was carried out by Paige in his PhD thesis as well as a series of papers \cite{paige1971computation,paige1976error, paige1980accuracy}.

To counteract this, a standard technique is to re-orthogonalize $V_{i+1}$ against all $\{V_j\}_{j \leq i}$, at each iteration. This technique has been employed to compute the low-rank approximation of huge sparse matrices \cite{simon2000low}, as well as in \cite{madras2019detecting} to compute an approximation to the top-$k$ eigenvectors.
Unfortunately, this version of Lanczos loses one the two benefits described above, in that it must store a larger state consisting of the whole matrix $V$. Therefore, we dub this version of thew algorithm \himem and the cheaper version described above \lomem.

Paige \cite{paige1980accuracy} proved that the loss of orthogonality in \lomem is strictly linked to the convergence of some of the approximate eigenpairs to the corresponding eigenpairs of $G$.
Moreover, the vectors $V_1 \dots V_k$ are not orthogonal because they are tilted towards such eigenpairs.
Accordingly, he observed that among the eigenpairs computed by \lomem there are multiple eigenpairs approximating the same eigenpair of $G$.
This can easily be observed while running both \lomem and \himem and computing the dot products of the their retrieved eigenvectors; see \Cref{fig:lanczos heatmap}. 

\begin{figure}
    \centering
    \includegraphics[width=0.45\textwidth]{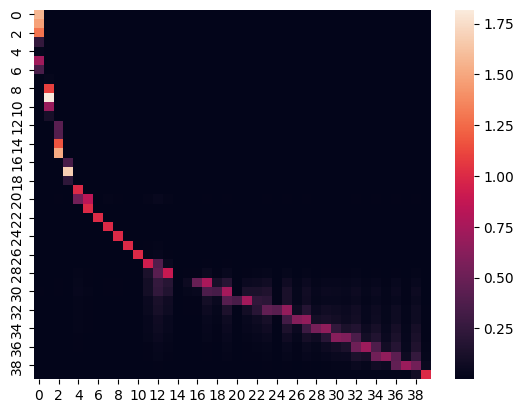}
    \caption{We study the \ggn\, of a LeNet model with $44.000$ parameters trained on MNIST. We run $40$ iterations of \himem and \lomem. Let $H = [H_1| \dots |H_{40}]$, $\Lambda_H$, $L = [L_1 | \dots |L_{40}]$, and $\Lambda_L$ be the eigenvectors and eigenvalues computed by the two algorithms respectively.
    We sort both sets of eigenvectors in decreasing order of corresponding eigenvalues. In position $(i, j)$ we plot $\langle H_i, L_j \rangle$. 
    It is apparent that multiple eigenvectors $L_j$ 
    correspond to the same eigenvector $H_i$.
    }
    \label{fig:lanczos heatmap}
\end{figure}

\paragraph{Post-hoc orthogonalization.}
Based on the observations of Paige \cite{paige1980accuracy}, we expect that orthogonalizing the output of \lomem{} post-hoc should yield an orthonormal basis that approximately spans the top-$k$ eigenspace.
Since in our method we only need to project vectors onto the top-$k$ eigenspace, this suffices to our purpose.

We confirm this expectation empirically. Indeed, using the same setting of \Cref{fig:lanczos heatmap} we define $\Pi_{LM}$ as the projection onto the top-$10$ principal components of $L \Lambda_L$, and define $\Pi_{HM}$ likewise.
Then, we measure the operator norm\footnote{This can be done efficiently via power method.} (i.e., the largest singular value) of $\Pi_{LM} - \Pi_{HM}$ and verify that it is fairly low, only $0.03$. 
Taking principal components was necessary, because there is no clear one-to-one correspondence between $L_i$s and $H_i$s, in that as observed in \Cref{fig:lanczos heatmap} many $L_i$s can correspond to a single $H_i$. Nonetheless, this proves that \lomem{} is capable of approximating the top eigenspace.

\subsection{Spectral properties of the Hessian /\ggn}

\begin{figure}
    \centering
    \includegraphics[width=1\textwidth]{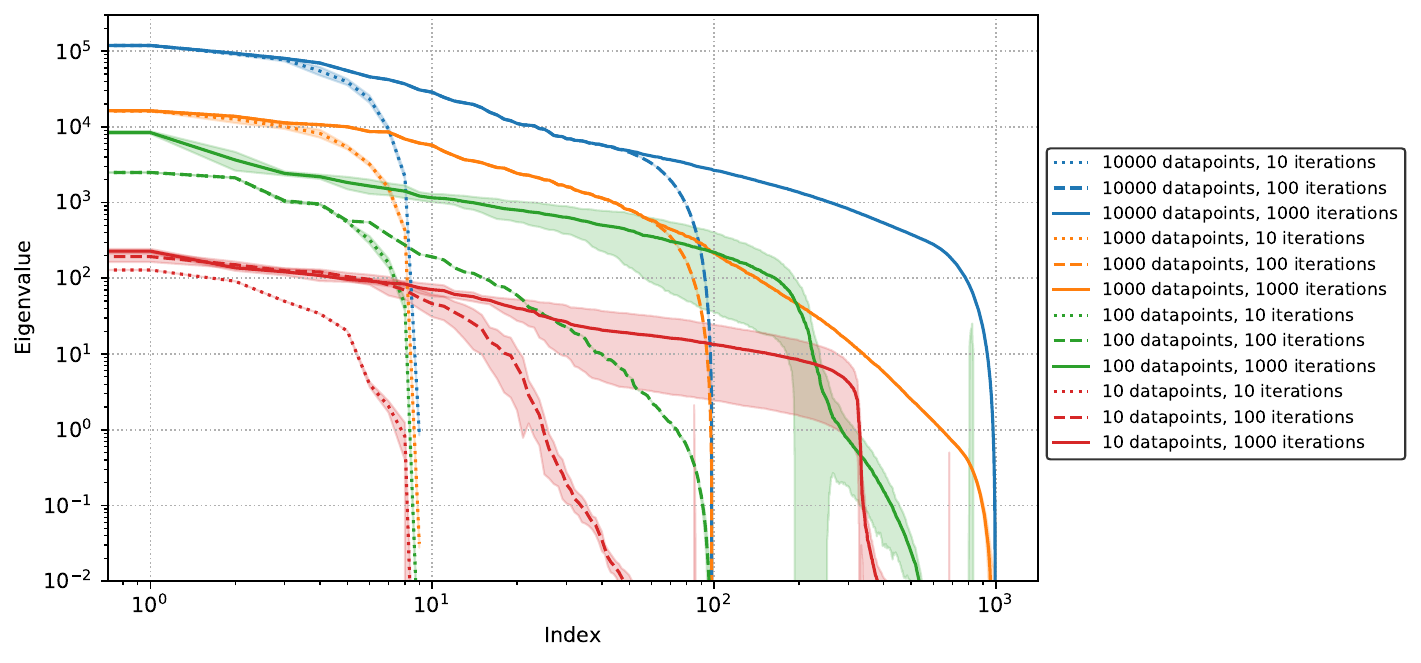}
    \includegraphics[width=1\textwidth]{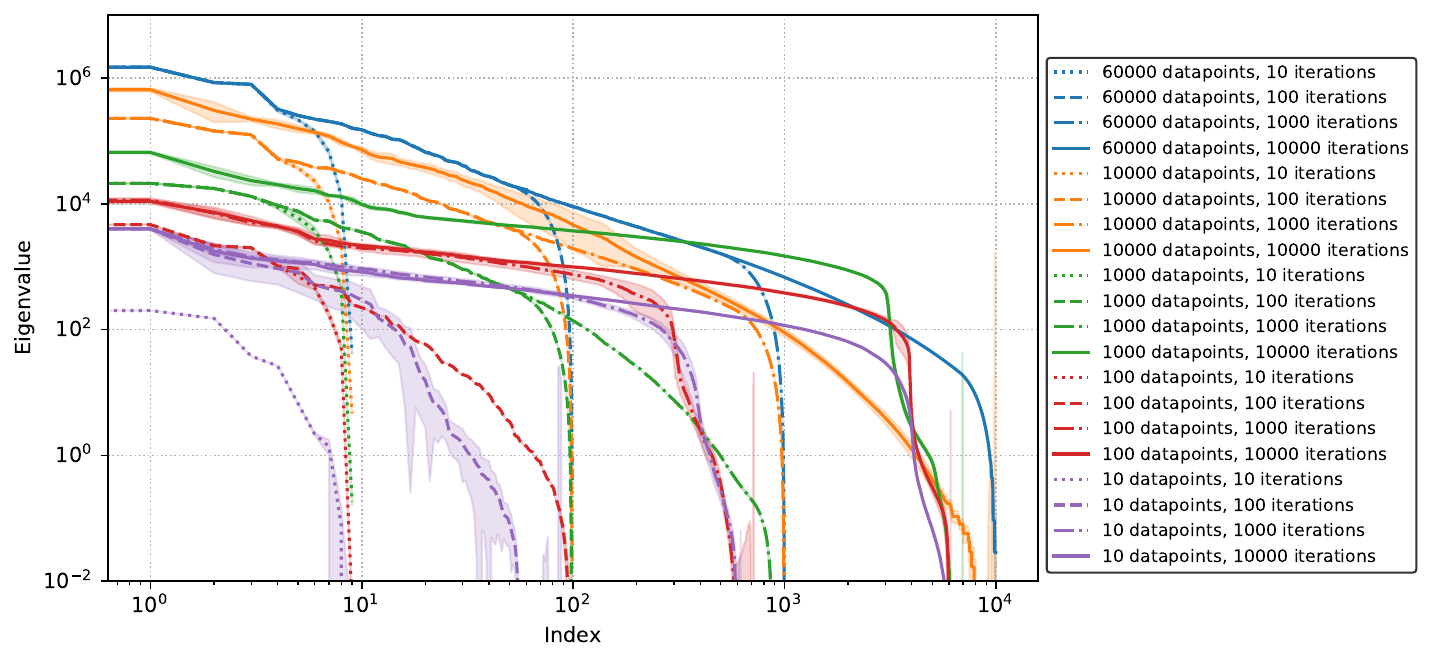}
    \includegraphics[width=1\textwidth]{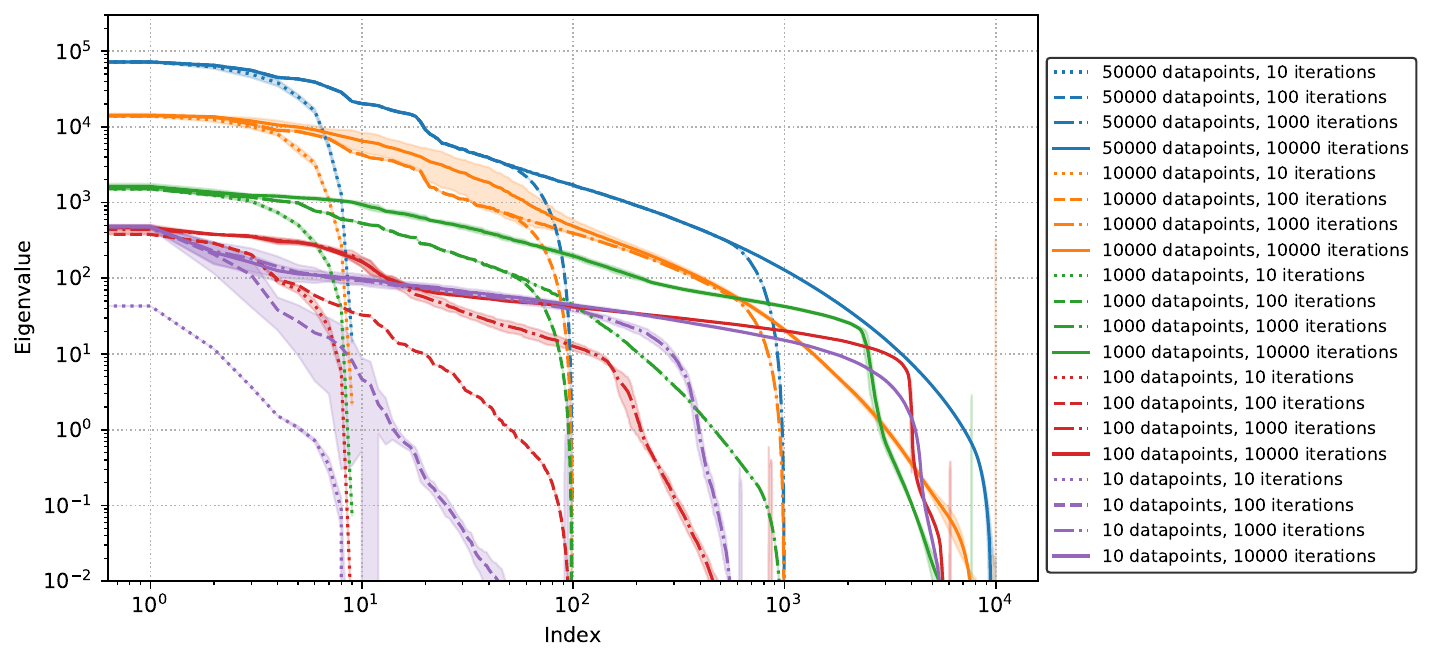}
\caption{Eigenspectrum obtained from \himem{} on: ResNet model ($p=300$K) trained on CIFAR-10 (top), LeNet model ($p=40$K) trained on FashionMNIST (middle) and MLP model ($p=20$K) trained on MNIST (bottom). Standard deviations over 5 Lanczos random seeds.}
\label{fig:eigenvals_study}
\end{figure}

\label{sec:spectral_property}
Spectral property of the \ggn\, and Hessian have been studied both theoretically and empirically. 
In theory, it was observed that in the limit of infinitely-wide NN, the Hessian is constant through training \cite{jacot2018neural}. However, this is no longer true if we consider NN with finite depth \cite{lee2019wide}.
In \cite{sagun2017empirical, papyan2018full, ghorbani2019investigation}, they show \textit{empirically} that the spectrum of the \ggn\, / Hessian is composed of two components: a bulk of near-zero eigenvalues, and a small fraction of outliers away from the bulk. 
Moreover, in \cite{ghorbani2019investigation}, they observe a large concentration of the gradients along the eigenspace corresponding to the outlier eigenvalues. 

In summary, the spectrum of the \ggn\, / Hessian of a deep NN is expected to decay rapidly, having a few outlier eigenvalues, along which most gradients are oriented. Therefore, we would expect a low-rank approximation of the \ggn\, to capture most relevant information. 

We test this phenomenon by using \himem and performing an ablation over both number of iterations and random initialization of the first vector. \cref{fig:eigenvals_study} shows the results over three different model trained on different datasets. The plots cointains means and std bars over 5 random initializations of the first Lanczos vector. It is interesting to observe how, varying the number of iterations, approximately the first 90\% values are the same, while the tail quickly drop down to numerical precision. Based on this observation we choose to only use the top 90\% vectors returned by Lanczos.

\section{Experimental setting and extended results}
\label{sec:extended_experiments}
Experiments are reproducible and the code is available at \url{https://github.com/IlMioFrizzantinoAmabile/uncertainty_quantification}. As explained later in greater details, calling the scripts \texttt{train\_model.py} and \texttt{score\_model.py} with the right parameters is enough to reproduce all the numbers we presented. 


\textbf{Reproducibility.} The script \texttt{bash/plot\_figure\_4.sh} collects the entire pipeline of setup, training, scoring and plotting and finally reproduce \cref{fig:results} and save them in the folder \texttt{figures}. The first three plots takes approximately 3 days to complete on a NVIDIA H100 80GB GPU.

The code is implemented in JAX \citep{jax2018github} in order to guarantee max control on the randomness, and, given a trained model and a random seed, the scoring code is fully deterministic (and thus fully reproducible). Nonetheless, the dataloader shuffling used at training time depends on PyTorch (bad) randomness management, and consequently it can be machine-dependent.

\subsection{Training details}
All experiments were conducted on models trained on one of the following datasets: \textsc{Mnist} \citep{lecun1998mnist}, \textsc{FashionMnist} \citep{xiao2017fashionmnist},  \textsc{Cifar-10} \citep{cifar10}, \textsc{CelebA} \citep{liu2015faceattributes} and \textsc{ImageNet} \citep{deng2009imagenet}. Specifically we train:
\begin{itemize}
    \item on \textsc{Mnist} with a MLP consisting of one hidden layer of size 20 with tanh activation functions, we trained with ADAM for 50 epochs with a batch size 128 and a learning rate $10^{-3}$; parameter size is $p=15910$;
    \item on \textsc{FashionMnist} with a LeNet consisting of two convolution layers with maxpool followed by 3 fully connected layers with tanh activation functions, we trained with ADAM for 50 epochs with a batch size 128 and a learning rate $10^{-3}$; parameter size is $p=44426$;
    \item on \textsc{Cifar-10} with a ResNet with $(3, 3, 3)$ blocks of channel sizes $(16, 32, 64)$ with relu activation functions, we trained with SGD for 200 epochs with a batch size 128 and a learning rate $0.1$, momentum $0.9$ and weight decay $10^{-4}$; parameter size is $p=272378$;
    \item on \textsc{CelebA} with a VisualAttentionNetwork with blocks of depths $(3, 3, 5, 2)$ and embedded dimensions of $(32, 64, 160, 256)$ with relu activation functions, we trained with Adam for 50 epochs with a batch size 128 and a learning rate decreasing from $10^{-3}$ to $10^{-5}$; parameter size is $p=3858309$.
    \item on \textsc{ImageNet} with a SWIN model with an embed dimension of $192$, blocks of depths $(2, 2, 18, 2)$ and number of heads $(6, 12, 24, 48)$, we trained with Adam with weight decay for 60 epochs plus a 10 epoch warmup, with a batch size 128 and a learning rate decreasing from $10^{-3}$ to $10^{-5}$; parameter size is $p=196517106$.
\end{itemize}
For training and testing the default splits were used. All images were normalized to be in the $[0,1]$ range. For the \textsc{CelebA} dataset 
we scale the loss by the class frequencies in order to avoid imbalances. The cross entropy loss was used as the reconstruction loss in all models but the VisualAttentionNetwork one, on which multiclass binary cross entropy was used.

We train all models with 10 different random seed, (except for ImageNet, that we only trained with one seed) which can be reproduced by running
\begin{lstlisting}[language=bash]
bash/setup.sh
source virtualenv/bin/activate
echo "Train all models"
for seed in {1..10}
do
    python train_model.py --dataset MNIST --likelihood classification --model MLP --seed $seed --run_name good --default_hyperparams
    python train_model.py --dataset FMNIST --likelihood classification --model LeNet --seed $seed --run_name good --default_hyperparams
    python train_model.py --dataset CIFAR-10 --likelihood classification --model ResNet --seed $seed --run_name good --default_hyperparams
    python train_model.py --dataset CelebA --likelihood binary_multiclassification --model VAN_tiny --seed $seed --run_name good --default_hyperparams
    python train_model.py --dataset ImageNet --likelihood classification --model SWIN_large --seed $seed --run_name good --default_hyperparams
done
\end{lstlisting}
The trained model \emph{test set} accuracies are:
$0.952\pm0.001$, 
$0.886\pm0.003$, 
$0.911\pm0.003$, 
$0.889\pm0.002$,
$0.672$, respectively for the 5 different settings.

\subsection{Scoring details}
The standard deviations of the scores presented are obtained scoring the indepentently trained models. For all methods we use: 10 seeds for \textsc{MNIST}, 10 seeds for \textsc{FMNIST}, 5 seeds for \textsc{CIFAR-10}, 3 seeds for \textsc{CelebA} and 1 seeds for \textsc{ImageNet}. The only exception is Deep Ensemble for which we used only 3 independent scores for the budget3 experiment (for a total of 9 models used) and only 1 score for the budget10 experiments (using all the 10 trained model).

For the fixed memory budget experiments we fix the rank $k$ to be equal to the budget (3 or 10) for all baseline, while for our method we were able to have a higher rank thanks to the memory saving induced by sketching, specifically:
\begin{itemize}
    \item for experiments with \textsc{MNIST} as in-distribution we used a sketch size $s=1000$ and a rank $k=45$ for the budget3 and $k=150$ for the budget10;
    \item for experiments with \textsc{FashionMNIST} as in-distribution we used a sketch size $s=1000$ and a rank $k=132$ for the budget3 and $k=440$ for the budget10;
    \item for experiments with \textsc{CIFAR-10} as in-distribution we used a sketch size $s=10000$ and a rank $k=81$ for the budget3 and a sketch size $s=50000$ and a rank $k=50$ for the budget10;
    \item for experiments with \textsc{CelebA} as in-distribution we used a sketch size $s=10000$ and a rank $k=100$ both for the budget3 and for the budget10.
    \item for experiments with \textsc{ImageNet} as in-distribution we used a sketch size $s=20000000$ and a rank $k=30$.
\end{itemize}

Baseline-specific details:
\begin{itemize}
    \item \textsc{swag}: we set the parameter collecting frequency to be once every epoch; we perform a grid search on momentum and learning rate hyperparameter: momentum in $[0.9,0.99]$ resulting in the latter being better, and learning rate in $[0.1, 1, 10]$ times the one used in training resulting in 1 being better. Although we spent a decent amount of effort in optimizing these, it is likely that a more fine search may yield slighlty better results;
    \item \textsc{scod}: the truncated randomized SVD has a parameter $T$ and we refer to the paper for full explanation. Importantly to us, the default value is $T=6k+4$ and the memory usage at preprocessing is $Tp$, which is greater than $kp$. Nonetheless at query time the memory requirement is $pk$ and we consider only this value in order to present the score in the most fair way possible;
    \item \textsc{Local Ensemble}: the method presented in the original paper actually makes use of the Hessian, rather than the GGN. And we denoted them as \textsc{le-h} and \textsc{le} respectively, which is not fully respectful of the original paper but we think this approach helps clarity. Anyway, we extensively test both variant and the resulting tables show that they perform very similarly;
    \item \textsc{Deep Ensemble}: each ensemble consists of either 3 or 10 models, depending on the experiment, each initialized with a different random seed;
    \item \textsc{Diagonal Laplace}: has a fixed memory requirement of $1p$.
\end{itemize}

All the scores relative to the budget3 experiment on \textsc{CIFAR-10} reported in \cref{tab:results} can be reproduced by running the following bash code
\begin{lstlisting}[language=bash]
bash/setup.sh
source ./virtualenv/bin/activate
dataset="CIFAR-10"
ood_datasets="SVHN CIFAR-100 CIFAR-10-C"
model_name="ResNet"
for seed in {1..5}
do
    echo "Sketched Lanczos"
    python score_model.py --ID_dataset $dataset --OOD_datasets $ood_datasets --model $model_name --model_seed $seed --run_name good --subsample_trainset 10000 --lanczos_hm_iter 0 --lanczos_lm_iter 81 --lanczos_seed 1 --sketch srft --sketch_size 10000
    echo "Linearized Laplace"
    python score_model.py --ID_dataset $dataset --OOD_datasets $ood_datasets --model $model_name --model_seed $seed --run_name good --subsample_trainset 10000 --lanczos_hm_iter 3 --lanczos_lm_iter 0 --lanczos_seed 1 --use_eigenvals
    echo "Local Ensembles"
    python score_model.py --ID_dataset $dataset --OOD_datasets $ood_datasets --model $model_name --model_seed $seed --run_name good --subsample_trainset 10000 --lanczos_hm_iter 3 --lanczos_lm_iter 0 --lanczos_seed 1
    echo "Local Ensemble Hessian"
    python score_model.py --ID_dataset $dataset --OOD_datasets $ood_datasets --model $model_name --model_seed $seed --run_name good --subsample_trainset 10000 --lanczos_hm_iter 3 --lanczos_lm_iter 0 --lanczos_seed 1 --use_hessian
    echo "Diag Laplace"
    python score_model.py --ID_dataset $dataset --OOD_datasets $ood_datasets --model $model_name --model_seed $seed --run_name good --score diagonal_lla  --subsample_trainset 10000
    echo "SCOD"
    python score_model.py --ID_dataset $dataset --OOD_datasets $ood_datasets --model $model_name --model_seed $seed --run_name good --score scod --n_eigenvec_hm 3 --subsample_trainset 10000
    echo "SWAG"
    python score_model.py --ID_dataset $dataset --OOD_datasets $ood_datasets --model $model_name --model_seed $seed --run_name good --score swag --swag_n_vec 3 --swag_momentum 0.99 --swag_collect_interval 1000
done
for seed in (1,4,7)
    echo "Deep Ensembles"
    python score_model.py --ID_dataset $dataset --OOD_datasets $ood_datasets --model $model_name --model_seed $seed --run_name good --score ensemble --ensemble_size 3
do
done
\end{lstlisting}

\emph{Disclaimer}. We did not include the KFAC approximation as a baseline, although its main selling point is the memory efficiency. The reason is that it is a layer-by-layer approximation (and so it neglects correlation between layers) and its implementation is layer-dependent. There exist implementations for both linear layers and convolutions, which makes the method applicable to MLPs and LeNet. But, to the best of our knowledge, there is no implementation (or even a formal expression) for skip-connections and attention layers, consequently making the method \emph{inapplicable} to ResNet, Visual Transformer, SWIN, or more complex architectures.

\emph{Disclaimer 2}. We did not include the max logit values \citep{hendrycks2016baseline} as a baseline. The reason is that it is only applicable for classification tasks, so for example would not be applicable for CelebA which is a binary multiclassfication task.

\paragraph{Out-of-Distribution datasets.}
In an attempt to evaluate the score performance as fairly as possible, we include a big variety of OoD datasets. For models trained on \textsc{MNIST} we used the rotated versions of \textsc{MNIST}, and similarly for \textsc{FashionMNIST}. We also include \textsc{KMNIST} as an \textsc{MNIST}-Out-of-Distribution. For models trained on \textsc{CIFAR-10} we used \textsc{CIFAR-100}, \textsc{svhn} and the corrupted versions of \textsc{CIFAR-10}, of the 19 corruption types available we only select and present the 14 types for which at least one method achieve an \textsc{auroc}$\geq0.5$. For models trained on \textsc{CelebA} we used the three subsets of \textsc{CelebA} corresponding to faces with eyeglasses, mustache or beard. These images were of course excluded from the In-Distribution train and test dataset. Similarly for models trained on \textsc{ImageNet} we used excluded from the In-Distribution train and test dataset 10 classes (Carbonara, Castle, Flamingo, Lighter, Menu, Odometer, Parachute, Pineapple, Triceratops, Volcano) and use them as OoD datasets. We do not include the results for Carbonara and Menu in thr Table since no method was able to achive an \textsc{auroc}$\geq0.5$.

Note that rotations are meaningful only for \textsc{MNIST} and \textsc{FashionMNIST} since other datasets will have artificial black padding at the corners which would make the OoD detection much easier.

\subsection{More experiment}
\label{sec:experiment_appendix}
Here we present more experimental results. Specifically we perform an ablation on sketch size $s$ in \cref{sec:experiment_synthetic} (on synthetic data) and on preconditioning size in \cref{sec:experiment_preconditioning}. Then in \cref{sec:experiment_budget10} we perform again all the experiments in \cref{fig:results}, where the budget is fixed to be $3p$, but now with a higher budget of $10p$. We score the 62 pairs ID-OoD presented in the previous section (each pair correspond to a x-position in the plots in \cref{fig:results_budget10}), with the exception of ImageNet because a single H100 GPU is not enough for it.

\subsubsection{Synthetic data}
\label{sec:experiment_synthetic}

Here we motivate the claim ``the disadvantage of introducing noise through sketching is outweighed by a higher-rank approximation" with a synthetic experiment. 
For a fixed ``parameter size" $p=10^6$ and a given ground-truth rank $R=100$ we generate an artificial Fisher matrix $M$. To do so, we sample $R$ uniformly random orthonormal vectors $v_1 \dots v_R\in\mathbb{R}^p$ and define $M = \sum_i \lambda_i v_i v_i^\top$ for some $\lambda_i>0$. Doing so allows us to (1) implement $x \mapsto M x$ without instantiating $M$ and (2) have explicit access to the exact projection vector product so we can measure both the sketch-induded error and the lowrank-approximation-induced error, so we can sum them and observe the trade-off.

For various values of $k$ (on x-axis) and $s$ (on y-axis), we run Skeched Lanczos on $x \mapsto M x$ for $k$ iteration with a sketch size $s$, and we obtain the sketched low rank approximation $U_S$. To further clarify, on the x-axis we added the column ``inf" which refers to the same experiments done without any sketching (thus essentially measuring Lanczos lowrank-approximation-error only) which coincides with the limit of $s\rightarrow\infty$. The memory requirement of this ``inf" setting is $Pk$, that is the same as the second to last column where $s=P$.
\begin{wrapfigure}[12]{r}{0.5\textwidth}
    \centering
    \vspace{-0.4cm}
    \includegraphics[width=1\linewidth]{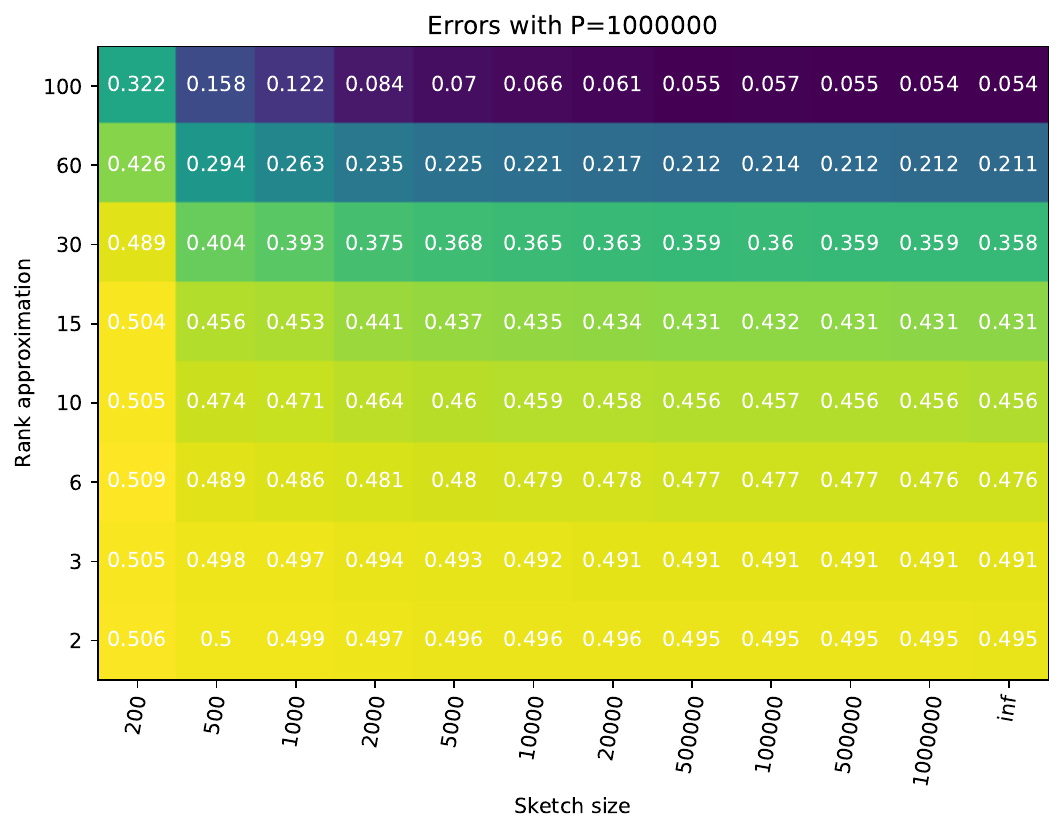}
    \vspace{-0.55cm}
    \caption{Ablation on rank $k$ and sketch size $s$.}
    \label{fig:synthetic_data}
\end{wrapfigure}

We generate a set of test Jacobians as random unit vectors conditioned on their projection onto $Span(v_1\dots v_R)$ having norm $\frac{1}{\sqrt2}$. We compute their score both exacltly (as in \cref{eq:score_low_rank}) and sketched (as in \cref{eq:score_low_rank_sketched}). In the Figure we show the difference between these two values. As expected, higher rank $k$ leads to lower error, and higher sketch size $s$ leads to lower error. Note that the memory requirement is proportional to the product $ks$, and the figure is in \emph{log-log} scale.

\subsubsection{Effect of different preconditioning sizes}
\label{sec:experiment_preconditioning}
\begin{figure}[ht]
    \centering
    \includegraphics[width=0.32\textwidth]{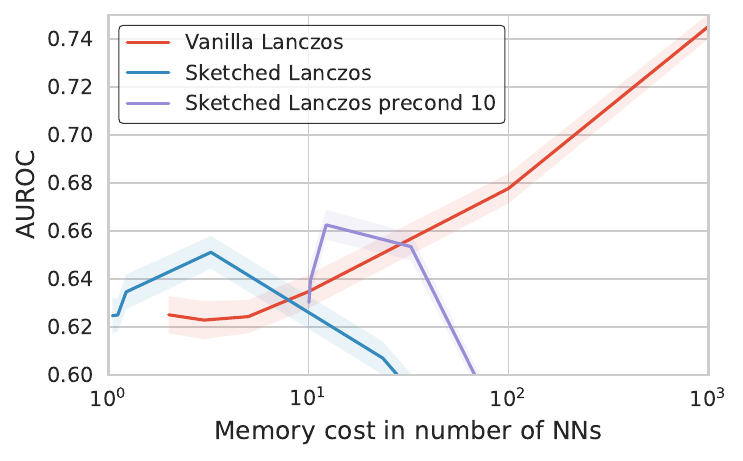}
    \includegraphics[width=0.32\textwidth]{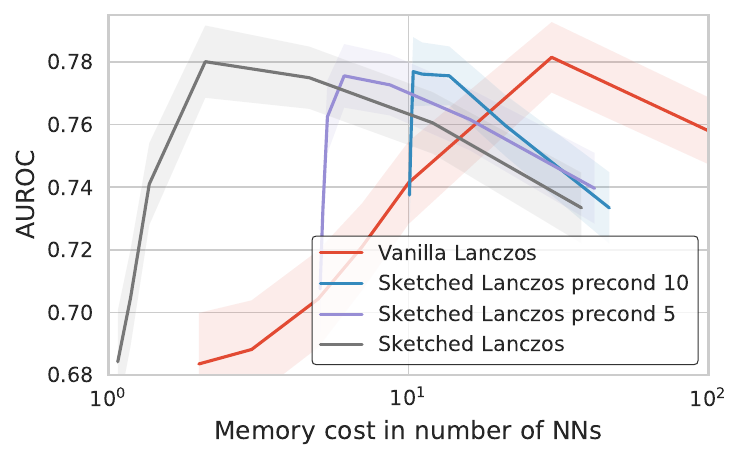}
    \includegraphics[width=0.32\textwidth]{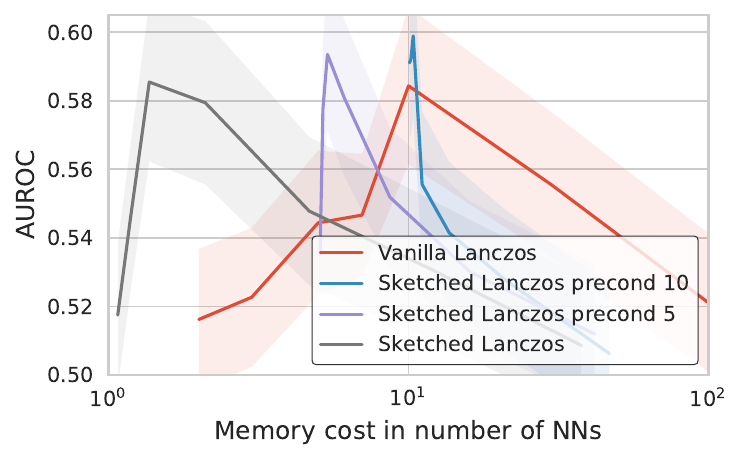}
\caption{Preconditioning comparison for LeNet $p=40$K on \textsc{FashionMnist} vs \textsc{FashionMnist}-rotated-30° (left), and ResNet $p=300$K on \textsc{Cifar-10} vs \textsc{Svhn} (center) or \textsc{Cifar}-pixelated (right).}
\label{fig:preconditioning_comparison}
\end{figure}
Preconditioning can be thought of as a smooth bridge between high-memory Lanczos and \textsc{Sketched Lanczos}. We run the former for a few steps and then continue with the latter on the preconditioned matrix. Consequently, the preconditioning scores ``start'' from the vanilla Lanczos curve, and improve quicker memory-wise, as shown in \cref{fig:preconditioning_comparison}.

\subsubsection{Budget 10}
\label{sec:experiment_budget10}
The results with memory bugdet $10p$ are in \cref{tab:budget10_partA}, \cref{tab:budget10_partB} and more extensively in \cref{fig:results_budget10}.

\begin{table}[ht]
    \centering
    \begin{tabular}{l|ccc|cc}
        &   \multicolumn{3}{c|}{\textsc{MNIST}} & \multicolumn{2}{c}{\textsc{FashionMNIST}} \\
        &   \multicolumn{3}{c}{\tiny{vs}} & \multicolumn{2}{c}{\tiny{vs}} \\
        & \textsc{FashionMNIST} & \textsc{KMNIST} & Rotation (avg) & \textsc{MNIST} & Rotation (avg) \\ \midrule
        \textsc{slu} (us) & 0.28 $\pm$ 0.02 & 0.46 $\pm$ 0.04 & 0.61 $\pm$ 0.02 & 0.95 $\pm$ 0.01 & 0.75 $\pm$ 0.03\\
        \textsc{lla} & 0.27 $\pm$ 0.03 & 0.34 $\pm$ 0.03 & 0.52 $\pm$ 0.01 & 0.88 $\pm$ 0.03 & 0.72 $\pm$ 0.03\\
        \textsc{lla-d} & \bftab{0.94 $\pm$ 0.03} & 0.98 $\pm$ 0.01 & 0.73 $\pm$ 0.01 & 0.68 $\pm$ 0.06 & 0.61 $\pm$ 0.04\\
        \textsc{le} & 0.27 $\pm$ 0.03 & 0.34 $\pm$ 0.03 & 0.52 $\pm$ 0.01 & 0.88 $\pm$ 0.03 & 0.72 $\pm$ 0.03\\
        \textsc{le-h} & 0.27 $\pm$ 0.03 & 0.34 $\pm$ 0.03 & 0.52 $\pm$ 0.01 & 0.88 $\pm$ 0.03 & 0.72 $\pm$ 0.03\\
        \textsc{scod} & 0.27 $\pm$ 0.02 & 0.36 $\pm$ 0.03 & 0.54 $\pm$ 0.01 & 0.89 $\pm$ 0.02 & 0.72 $\pm$ 0.03\\
        \textsc{swag} & 0.26 $\pm$ 0.05 & 0.18 $\pm$ 0.03 & 0.39 $\pm$ 0.03 & 0.77 $\pm$ 0.04 & 0.72 $\pm$ 0.04\\
        \textsc{de} & 0.90 & \bftab{1.00} & \bftab{0.79} & \bftab{0.99} & \bftab{0.90}\\
    \end{tabular}
    \caption{\texttt{AUROC} scores of Sketched Lanczos Uncertainty vs baselines with a memory budget of $10p$.}
    \label{tab:budget10_partA}
    \vspace{-0.5\baselineskip}
\end{table}

\begin{table}[ht]
    \centering
    \begin{tabular}{l|ccc|lc}
        &   \multicolumn{3}{c|}{\textsc{CIFAR-10}} & \multicolumn{2}{c}{\textsc{CelebA}} \\
        &   \multicolumn{3}{c}{\tiny{vs}} & \multicolumn{2}{c}{\tiny{vs}} \\
        & \textsc{SVHN} & \textsc{CIFAR-100} & Corruption (avg) & \hspace{2pt}\textsc{food-101} & Hold-out (avg) \\ \midrule
        \textsc{slu} (us) & \bftab{0.76 $\pm$ 0.06} & \bftab{0.54 $\pm$ 0.02} & 0.57 $\pm$ 0.04 & 0.95 $\pm$ 0.003 & \bftab{0.72 $\pm$ 0.02} \\
        \textsc{lla} & 0.72 $\pm$ 0.06 & 0.53 $\pm$ 0.02 & 0.57 $\pm$ 0.04 & 0.94 $\pm$ 0.001 & 0.70 $\pm$ 0.02 \\
        \textsc{lla-d} & 0.55 $\pm$ 0.10 & 0.52 $\pm$ 0.02 & 0.52 $\pm$ 0.04 & 0.80 $\pm$ 0.02 & 0.52 $\pm$ 0.03 \\
        \textsc{le} & 0.72 $\pm$ 0.06 & 0.53 $\pm$ 0.02 & 0.57 $\pm$ 0.04 & 0.94 $\pm$ 0.001 & 0.70 $\pm$ 0.02 \\
        \textsc{le-h} & 0.72 $\pm$ 0.07 & 0.53 $\pm$ 0.02 & 0.57 $\pm$ 0.04  & 0.93 $\pm$ 0.002 & 0.67 $\pm$ 0.02 \\
        \textsc{scod} & 0.75 $\pm$ 0.05 & \bftab{0.54 $\pm$ 0.02} & \bftab{0.58 $\pm$ 0.04} & 0.94 $\pm$ 0.002 & 0.70 $\pm$ 0.02 \\
        \textsc{swag} & 0.52 $\pm$ 0.06 & 0.44 $\pm$ 0.02 & 0.51 $\pm$ 0.05  & 0.78 $\pm$ 0.13 & 0.55 $\pm$ 0.09 \\
        \textsc{de} & 0.45 & 0.51 & 0.50 & \hspace{13pt}\bftab{0.96} & 0.68 \\
    \end{tabular}
    \caption{\texttt{AUROC} scores of Sketched Lanczos Uncertainty vs baselines with a memory budget of $10p$.}
    \label{tab:budget10_partB}
    \vspace{-0.5\baselineskip}
\end{table}

\begin{figure}[ht]
  \centering
  \begin{subfigure}[c]{0.47\textwidth}
      \includegraphics[width=\linewidth]{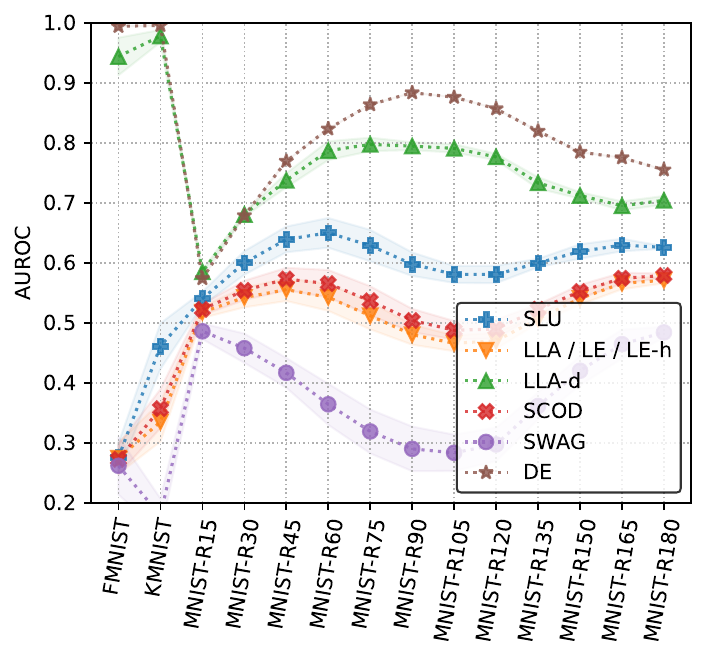}
      \caption{MLP $p=15$K train on MNIST}
  \end{subfigure}
  \begin{subfigure}[c]{0.47\textwidth}
      \includegraphics[width=\linewidth]{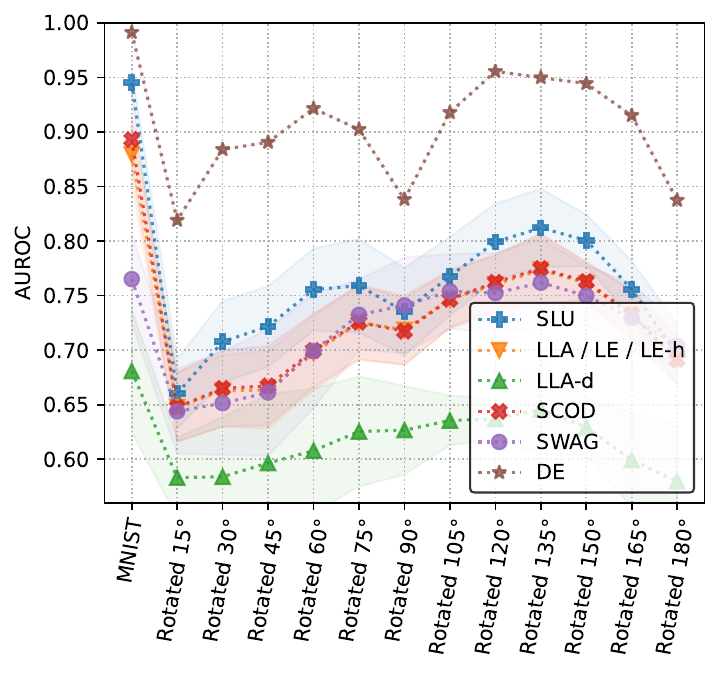}
      \caption{LeNet $p=40$K train on FMNIST}
  \end{subfigure}
  \begin{subfigure}[c]{0.47\textwidth}
      \includegraphics[width=\linewidth]{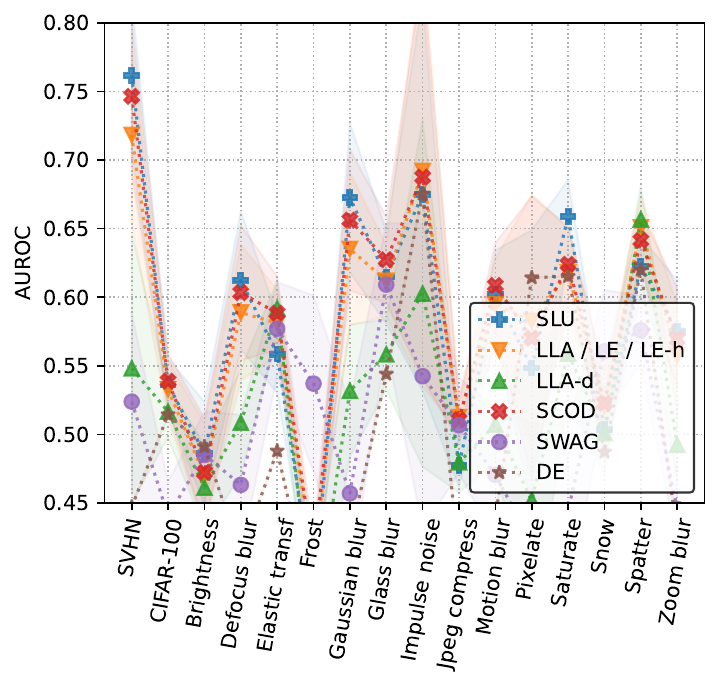}
      \caption{ResNet $p=300$K train on CIFAR-10}
  \end{subfigure}
  \begin{subfigure}[c]{0.47\textwidth}
      \vspace{-20pt}
      \includegraphics[width=\linewidth]{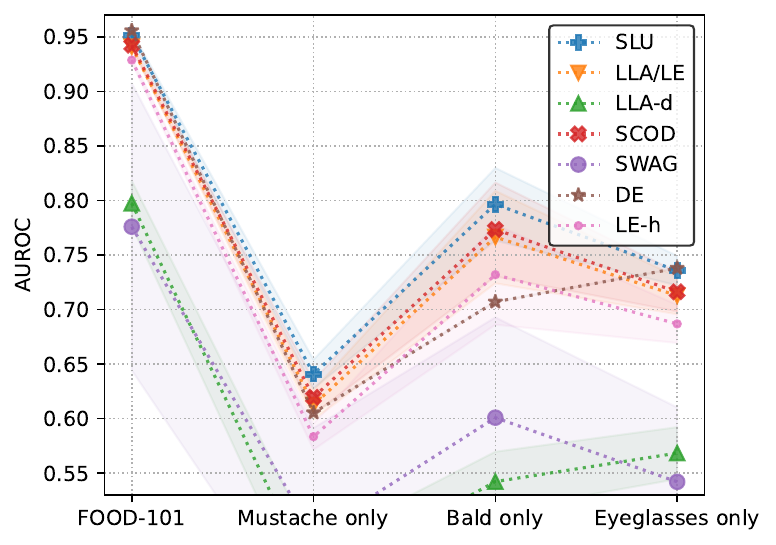}
      \caption{VAN $p=4$M train on CelebA}
  \end{subfigure}
  \caption{\texttt{AUROC} scores of Sketched Lanczos Uncertainty vs baselines with a memory budget of $10p$.}
  \label{fig:results_budget10}
\end{figure}


\end{document}